\journal{}
\newtheorem{theorem}{Theorem}
\newtheorem{lemma}[theorem]{Lemma}
\newtheorem{proposition}[theorem]{Proposition}
\newtheorem{corollary}[theorem]{Corollary}
\newtheorem{remark}{\it Remark}
\newenvironment{proof}{\par\smallskip\noindent\emph{Proof.\/} }{\unskip\null\hfill$\square$\par\medskip}
\newcommand{\R}{{\mathbb R}}
\newcommand{\be}[1]{\begin{equation}\label{#1}}
\newcommand{\ee}{\end{equation}}
\newcommand{\nrmrd}[2]{\left\|#1\right\|_{L^{#2}(\R^2)}}
\newcommand{\nrmn}[2]{\left\|#1\right\|_{L^{#2}(n_\infty\,dx)}}
\begin{document}\begin{frontmatter}
\title{Asymptotic behaviour for small mass in the two-dimensional parabolic-elliptic Keller-Segel model}

\author[B]{Adrien Blanchet}
\ead{adrien.blanchet@univ-tlse1.fr}
\author[D]{Jean Dolbeault}
\ead{dolbeaul@ceremade.dauphine.fr}
\author[E]{Miguel Escobedo}
\ead{miguel.escobedo@ehu.es}
\author[F]{Javier Fern{\'a}ndez}
\ead{fcojavier.fernandez@unavarra.es}

\address[B]{GREMAQ (UMR CNRS no. 5604 et INRA no. 1291), Universit\'e de Toulouse 1, Manufacture des Tabacs,\\ Aile J.J. Laffont, 21 all\'ee de Brienne, 31000 Toulouse, France.}

\address[D]{CEREMADE (UMR CNRS no. 7534), Universit\'e Paris-Dauphine, Place de Lattre de Tassigny,\\ 75775 Paris C\'edex~16, France.}

\address[E]{Departamento de Matem\'aticas, Facultad de Ciencias y Tecnolog\'\i a, Universidad del Pa\'{\i}s Vasco, Barrio Sarriena s/n, 48940 Lejona (Vizcaya), Spain.}

\address[F]{Departamento Autom\'atica y Computaci\'on, Universidad P\'ublica de Navarra, Campus Arrosad\'{\i}a s/n,\\ 31.006 Pamplona, Spain}

\begin{abstract}
The Keller-Segel system describes the collective motion of cells that are attracted by a chemical substance and are able to emit it. In its simplest form, it is a conservative drift-diffusion equation for the cell density coupled to an elliptic equation for the chemo-attractant concentration. This paper deals with the rate of convergence towards a unique stationary state in self-similar variables, which describes the intermediate asymptotics of the solutions in the original variables. Although it is known that solutions globally exist for any mass less $8\pi\,$, a smaller mass condition is needed in our approach for proving an exponential rate of convergence in self-similar~variables.
\end{abstract}

\begin{keyword}
Keller-Segel model \sep chemotaxis \sep drift-diffusion \sep self-similar solution \sep intermediate asymptotics \sep entropy \sep free energy \sep rate of convergence \sep heat kernel
\MSC 35B40, 35K55, 35K05
%
%
\end{keyword}
\end{frontmatter}

\section{Introduction and main results}\label{Sec:Intro}

In its simpler form, the Keller and Segel system reads
\be{Eqn:Keller-Segel}\left\{\begin{array}{ll}
\displaystyle \frac{\partial u}{\partial t}=\Delta u-\nabla \cdot (u\,\nabla v)\qquad & x\in\R^2\,,\;t>0\;,\vspace{.3cm}\\
\displaystyle -\Delta v=u\qquad & x\in\R^2\,,\;t>0\;,\vspace{.3cm}\\
u(\cdot,t=0)=n_0\geq 0&x\in\R^2\,.
\end{array}\right.\ee
Throughout this paper, we shall assume that
\be{eq:2}
n_0\in L^1_+(\R^2,(1+|x|^2)\,dx)\;,\quad n_0\log n_0\in L^1(\R^2,dx)\;,\quad\mbox{and}\quad M:=\int_{\R^2}n_0(x)\,dx<8\,\pi\;.
\end{equation}
These conditions are sufficient to ensure that a solution in a distribution sense exists globally in time and satisfies $M=\int_{\R^2} u(x,t)\,dx$ for any $t\ge 0\,$, see \cite{JL,MR2103197,BDP}. In dimension $d=2\,$, the Green kernel associated to the Poisson equation is a logarithm and we shall consider only the solution given by $v=-\frac 1{2\pi}\,\log|\cdot|*u\,$. Such a non-linearity is critical in the sense that the system is globally invariant under scalings. To study the asymptotic behaviour of the solutions, it is therefore more convenient to work in self-similar variables.
Define the rescaled functions $n$ and~$c$~by
\be{Eqn:ChangeVar}
u(x,t)=\frac 1{R^2(t)}\,n\left(\frac x{R(t)},\tau(t)\right) \quad\mbox{and}\quad v(x,t)=c\left(\frac x{R(t)},\tau(t)\right)
\ee
with $R(t)=\sqrt{1+2t}$ and $\tau(t)=\log R(t)\,$. The rescaled system is
\be{Eqn:Keller-Segel-Rescaled}\left\{\begin{array}{ll}
\displaystyle \frac{\partial n}{\partial t}=\Delta n-\nabla \cdot (n\,(\nabla c-x))\qquad & x\in\R^2\,,\;t>0\;,\\
\displaystyle c=-\frac 1{2\pi}\,\log |\cdot|*n\qquad & x\in\R^2\,,\;t>0\;,\vspace{.3cm} \\
n(\cdot,t=0)=n_0\geq 0&x\in\R^2\,.
\end{array}\right.\ee
Under Assumptions (\ref{eq:2}), it has been proved in \cite{BDP} that
\[
\lim_{t\to\infty}\nrmrd{n(\cdot,\cdot+t)-n_\infty}1=0\quad\mbox{and}\quad \lim_{t\to\infty}\nrmrd{\nabla c(\cdot,\cdot+t)-\nabla c_\infty}2=0
\]
where $(n_\infty,c_\infty)$ is the unique solution of
\[
n_\infty=M\,\frac{e^{\,c_\infty-|x|^2/2}}{\int_{\R^2}e^{c_\infty-|x|^2/2}\,dx}= -\Delta c_\infty\;,\quad\mbox{with}\quad c_\infty=-\frac 1{2\pi}\log |\cdot|*n_\infty\;.
\]
Moreover, $n_\infty$ is smooth and radially symmetric. The uniqueness has been established in \cite{MR2249579}. As $|x| \to +\infty$, $n_\infty$ is dominated by $e^{-(1-\epsilon)|x|^2/2}$ for any $\epsilon \in (0,1)$, see~\cite[Lemma~4.5]{BDP}. From the bifurcation diagram of $\nrmrd{n_\infty}\infty$ as a function of $M$, it follows that
\be{Eqn:BifurcationDiagram}\lim_{M\to 0_+}\nrmrd{n_\infty}\infty=0\;.
\ee

Under the assumption that the mass of the initial data is small enough, we first obtain estimates of the time decay rate of the $L^p$-norms of the solution $u$ of (\ref{Eqn:Keller-Segel}). Similar bounds have been obtained in several papers on Keller-Segel models such as \cite{MR2035446,MR2333473,MR2404241} (also see references therein). The interested reader may refer to \cite{biler2008pel,Rac08} for recent results relating the parabolic-parabolic and the parabolic-elliptic Keller-Segel systems. Nevertheless none of these previous works deals with~\eqref{Eqn:Keller-Segel}. See Remark \ref{estimationlp} below for more details. In a second step we prove the convergence of $n(t)$ to $n_\infty$ in the weighted Sobolev space $H^1(e^{|x|^2/4}dx)$ as $t\to +\infty\,$. Finally, we establish our main result, an exponential rate of convergence of $n(t)$ to $n_\infty$ in $L^2(n_\infty^{-1})$:
\begin{theorem}\label{Thm:Main}
There exists a positive constant $M^*$ such that, for any initial data $n_0\in L^2(n_\infty^{-1}\,dx)$ of mass $M<M^*$ satisfying \eqref{eq:2}, the rescaled Keller-Segel system \eqref{Eqn:Keller-Segel-Rescaled} has a unique solution $n\in C^0(\R^+,L^1(\R^2))\cap L^\infty((\tau,\infty)\times\R^2)$ for any $\tau>0\,$. Moreover, there are two positive constants, $C$ and $\delta\,$, such that
\[
\int_{\R^2}|n(t, x)-n_\infty(x)|^2\, \frac{dx}{n_\infty(x)} \le C\,e^{-\,\delta\, t}\quad\forall\;t>0\;.
\]
As a function of $M\,$, $\delta$ is such that $\lim_{M\to 0_+}\delta(M)=1\,$.
\end{theorem}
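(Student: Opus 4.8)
The plan is to linearise the rescaled flow around $n_\infty$ and to combine a weighted Poincar\'e inequality with a perturbative control of the nonlocal and nonlinear terms, the smallness of $M$ being what makes the perturbation small. Write the rescaled equation in the gradient-flow form $\partial_t n=\nabla\cdot[\,n\,\nabla(\log n-c+\tfrac12|x|^2)\,]$ and use that $\log n_\infty-c_\infty+\tfrac12|x|^2$ is a constant, so that $\log n-c+\tfrac12|x|^2=\log(n/n_\infty)-\psi+\mathrm{const}$, where $\psi:=c-c_\infty=-\tfrac1{2\pi}\log|\cdot|*(n-n_\infty)$ solves $-\Delta\psi=n-n_\infty$. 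Setting $h:=(n-n_\infty)/n_\infty$ and $E(t):=\tfrac12\nrmn{h}{2}^2=\tfrac12\int_{\R^2}(n-n_\infty)^2\,n_\infty^{-1}\,dx$, a direct integration by parts gives the energy identity
\[
\frac{dE}{dt}=-\int_{\R^2}n_\infty\,|\nabla h|^2\,dx+\int_{\R^2}n_\infty\,(1+h)\,\nabla h\cdot\nabla\psi\,dx .
\]
The first term is the good dissipation; the second gathers the nonlocal linear coupling and the genuinely nonlinear contributions.

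First I would import the results announced before the theorem. Well-posedness in $C^0(\R^+,L^1)\cap L^\infty((\tau,\infty)\times\R^2)$ follows from the construction in the cited references together with the small-mass regularity; more importantly, under the small-mass hypothesis the $L^p$-decay estimates give, for every $\tau>0$, a uniform bound $\nrmrd{n(t)}{\infty}\le C$ on $[\tau,\infty)$, hence a two-sided comparison of $n(t)$ with $n_\infty$, and the convergence in $H^1(e^{|x|^2/4}\,dx)$ provides smallness of $h$ and of $\nabla\psi$ for large times. This regularity is what is needed to make sense of $E(t)$ in the heavy weight $n_\infty^{-1}\sim e^{|x|^2/2}$, to justify the integration by parts above, and to treat the second term as a genuine perturbation.

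The core is the following pair of estimates, uniform for small $M$. Since $n-n_\infty$ has zero mass (conservation of $M$), the weighted Poincar\'e inequality for $n_\infty$ yields $\int n_\infty|\nabla h|^2\ge\lambda(M)\int n_\infty h^2$; as $M\to0$ one has $n_\infty\to M\,(2\pi)^{-1}e^{-|x|^2/2}$ and $\nrmrd{n_\infty}{\infty}\to0$ by \eqref{Eqn:BifurcationDiagram}, so $\lambda(M)$ converges to the Gaussian Poincar\'e constant, which equals $1$; this is the origin of $\lim_{M\to0}\delta(M)=1$. For the remainder I would show, using the $L^\infty$ bounds above together with a logarithmic Hardy--Littlewood--Sobolev-type control of $\int n_\infty(1+h)^2|\nabla\psi|^2$ by a multiple of $\tfrac M{8\pi}\int n_\infty|\nabla h|^2$, that the second term is bounded by $\theta(M,t)\int n_\infty|\nabla h|^2$ with $\theta$ small for small $M$ and large $t$.

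Inserting these two estimates into the energy identity, the dissipation dominates the remainder for small $M$ and large $t$, giving a differential inequality $\tfrac{dE}{dt}\le-\delta(M)\,E$ in which $\delta(M)$ is bounded below in terms of the Poincar\'e constant $\lambda(M)$; a Gr\"onwall argument then yields $E(t)\le C\,e^{-\delta t}$, and since $\lambda(M)\to1$ we obtain $\lim_{M\to0}\delta(M)=1$. The main obstacle is precisely the uniform-in-(small)-mass handling of the \emph{nonlocal} term: estimating $\psi=(-\Delta)^{-1}(n-n_\infty)$ against the heavy-weighted norm $\nrmn{h}{2}$, and first upgrading the convergence from the weak weight $e^{|x|^2/4}$ to the strong weight $n_\infty^{-1}$ so that $E(t)$ is finite and small. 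Both steps are where the condition $M<M^*$ is indispensable.
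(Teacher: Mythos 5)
Your proposal follows essentially the same route as the paper: the same energy identity for $f=(n-n_\infty)/n_\infty$ in $L^2(n_\infty\,dx)$, the same weighted Poincar\'e/spectral-gap inequality with constant tending to $1$ as $M\to 0_+$, the same HLS-plus-$L^\infty$ perturbative control of the nonlocal term, and a Gr\"onwall argument. The one point you delegate to the references that the paper actually has to prove is uniqueness in the stated class; it follows at no extra cost by applying the very same differential inequality to $f=(n_2-n_1)/n_\infty$ for two solutions with the same initial datum.
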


\begin{remark}\rm
As it has been proved in \cite{MR2103197,BDP,MR2436186}, the condition $M\le 8\, \pi$ is necessary and sufficient for the global existence of the solutions of \eqref{Eqn:Keller-Segel} under Assumption \eqref{eq:2}. The extra smallness condition in Theorem \ref{Thm:Main} appears at two levels in our proof:
\begin{itemize}
\item[1.] We first prove a uniform decay estimate of the solution of \eqref {Eqn:Keller-Segel} by the \emph{method of the trap.\/} Our estimates and the version of the Hardy-Littlewood-Sobolev (HLS) inequality we use require that $M<M_1$ for some positive, explicit constant $M_1\,$. This question is dealt with in Section~\ref{Sec:Decay}.
\item[2.] Rates of convergence in self-similar variables are given by the \emph{spectral gap\/} of a linearised operator, denoted by $\mathcal L\,$, which is associated to \eqref{Eqn:Keller-Segel-Rescaled}. This gap is estimated by a perturbation method, which gives two further restrictions on $M\,$. See Sections~\ref{Sec:f} and \ref{Sec:ExponentialConvergence}.
\end{itemize}
The first occurrence of an extra smallness condition, in the proof of the sharp time decay of the $L^p$ norms, is not surprising. It appears in several similar estimates as for example in \cite{MR2035446,MR2333473,MR2404241} and references therein. On the other hand, the estimate of the spectral gap of the linearised operator~$\mathcal L$ is rather crude. See Remark~\ref{Rem:galmathcalL} for more comments in this direction.

Under a smallness condition for the mass, we shall also obtain a uniqueness result for the solutions of \eqref{Eqn:Keller-Segel-Rescaled}, see Section~\ref{Sec:ExponentialConvergence}. For sake of simplicity, we shall speak of \emph{the} solution of \eqref{Eqn:Keller-Segel-Rescaled}, but, in the preliminary results, \emph{the} solution has to be understood as \emph{a} solution of the system which is achieved as a limit of an approximation procedure, as in \cite{JL,BDP}. 

Our results are actually stronger than the ones stated in Theorem~\ref {Thm:Main}. We can indeed consider any solution of \eqref{Eqn:Keller-Segel-Rescaled} as in \cite{BDP}:
\begin{eqnarray*}
&&n\in C^0(\R^+,L^1(\R^2))\;,\\
&&n\,\log n\,,\; n\,|x|^2\in L^\infty(\R^+,L^1(\R^2))\;,\\
&&2\nabla\sqrt n+x\,\sqrt n-\sqrt n\,\nabla c\in L^1(\R^+,L^2(\R^2))\;,
\end{eqnarray*}
and prove all \emph{a priori} estimates by standard but tedious truncation methods that we shall omit in this paper.
\end{remark}

\section{Decay Estimates of $u(t)$ in $L^\infty(\R^2)$}\label{Sec:Decay}

In this section we consider the Keller-Segel system \eqref{Eqn:Keller-Segel}, in the original variables.
\begin{lemma}\label{Lem:Uniform-u}
There exists a positive constant $M_1$ such that, for any mass $M<M_1\,$, there is a positive constant $C=C(M)$ such that, if $u\in C^0(\R^+,L^1(\R^2))\cap L^\infty(\R^+_{\rm loc}\times\R^2)$ is a solution of \eqref{Eqn:Keller-Segel} with initial datum $n_0$ satisfying \eqref{eq:2}, then
\[
\nrmrd{u(t)}\infty \le C\, t^{-1}\quad\forall\;t>0\;.
\]
\end{lemma}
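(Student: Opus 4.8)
The plan is to derive a closed differential inequality for the $L^p$ norms of $u$, to absorb the nonlinear contribution into the diffusive dissipation when $M$ is small, and then to let $p\to\infty$ through a Moser-type iteration while keeping the constants under control.

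First I would compute, for $p>1$ and with the shorthand $w:=u^{p/2}$, the time derivative of $\nrmrd{u(t)}{p}^p$. Multiplying the first equation in \eqref{Eqn:Keller-Segel} by $u^{p-1}$ and integrating by parts, the diffusion produces the dissipation $-\frac{4(p-1)}{p}\int_{\R^2}|\nabla w|^2\,dx$, while the drift term, after one further integration by parts and the use of $-\Delta v=u$, becomes the nonlinear term $(p-1)\int_{\R^2}u^{p+1}\,dx$; that is,
\[
\frac{d}{dt}\int_{\R^2}u^p\,dx=-\frac{4(p-1)}{p}\int_{\R^2}|\nabla w|^2\,dx+(p-1)\int_{\R^2}u^{p+1}\,dx .
\]
The heart of the matter is to control the last term by the first. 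I would use two Gagliardo--Nirenberg--Sobolev inequalities in dimension two: one to write $\int_{\R^2}u^{p+1}\,dx=\nrmrd{w}{s}^{s}$ with $s=2(p+1)/p$ as $\nrmrd{w}{s}^{s}\le C_s\,\nrmrd{\nabla w}{2}^{2/p}\,\nrmrd{w}{2}^{2}$, and a second, interpolating $\nrmrd{w}{2}$ between $\nrmrd{\nabla w}{2}$ and $\nrmrd{w}{2/p}=M^{p/2}$, to get $\int_{\R^2}u^p\,dx\le C\,M\,\nrmrd{\nabla w}{2}^{2(p-1)/p}$. Combining the two, and noting that the gradient exponents add up to $2$, turns the nonlinear term into a constant (depending on $p$, on the two GNS constants, and linearly on $M$) times $\int_{\R^2}|\nabla w|^2\,dx$.

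This is exactly where the smallness of the mass enters: choosing $M<M_1$, with $M_1$ the explicit threshold at which the dissipation dominates (the relevant sharp constants being of HLS type, which fixes the value of $M_1$), the right-hand side becomes $\le -a_p\int_{\R^2}|\nabla w|^2\,dx$ with $a_p>0$. Feeding back the lower bound $\int_{\R^2}|\nabla w|^2\,dx\ge c\,(\int_{\R^2}u^p\,dx)^{p/(p-1)}/M^{p/(p-1)}$ produces the autonomous differential inequality $\frac{d}{dt}Y_p\le -b_p\,Y_p^{p/(p-1)}$ for $Y_p:=\int_{\R^2}u^p\,dx$, whose integration yields $Y_p(t)\le C\,t^{-(p-1)}$, that is $\nrmrd{u(t)}{p}\le C\,t^{-(1-1/p)}$. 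Since $n_0$ is only in $L^1$, the norms $Y_p$ are a priori infinite at $t=0$, so I would in fact run the argument on the time-weighted quantities $t^{\,p-1}Y_p(t)$ by the \emph{method of the trap}: a continuity argument in $t$ showing that the a priori bound, used in weak form to control the nonlinearity under the smallness assumption, reproduces itself and hence holds for all $t>0$.

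Finally, to reach $p=\infty$ I would iterate along $p=2^k$ and pass to the limit, giving $\nrmrd{u(t)}\infty\le C\,t^{-1}$. The single-$p$ estimate above is essentially routine; the delicate part, and the main obstacle, is to keep $M_1$ and the prefactors uniform in $p$. The constant $C_s$ degenerates as $s=2(p+1)/p\to2$, and one must track how $a_p$, $b_p$ and the constants accumulated along the iteration behave, so that the smallness threshold $M_1$ does not shrink to zero and the limiting decay rate is exactly $t^{-1}$. Controlling this Moser iteration, rather than the individual differential inequality, is where the real work lies.
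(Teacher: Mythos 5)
Your route is genuinely different from the paper's. The paper does not touch the $L^p$ energy identities at all: it writes $u(t_0+t)$ by Duhamel's formula against the heat kernel, estimates the nonlinear convolution term by Young's inequality, H\"older and the sharp HLS inequality \emph{directly at the level of the $L^\infty$ norm}, and then closes the resulting scalar inequality $\psi(t)\le \frac{M}{2\pi}+C_0\,\psi(t)^\theta$ for $\psi(t)=\sup_{0\le s\le t}2s\,\nrmrd{u(2s)}\infty$ by the continuity (trap) argument, with a single, explicit smallness threshold. Your fixed-$p$ computation is essentially correct (the identity for $\frac{d}{dt}\int_{\R^2}u^p\,dx$, the GNS absorption, the Nash-type lower bound and the resulting decay $\nrmrd{u(t)}p\le C\,t^{-(1-1/p)}$ are all standard for this system, and the trap is indeed the right device to handle $Y_p(0)=+\infty$).

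The gap is exactly where you locate it, and it is not a matter of bookkeeping. Your combined GNS estimate reads $\int_{\R^2}u^{p+1}\,dx\le K_p\,M\,\nrmrd{\nabla w}2^2$ with $w=u^{p/2}$, so absorption into the dissipation requires $(p-1)\,K_p\,M<\tfrac{4(p-1)}{p}$, i.e.\ $M<4/(p\,K_p)$. The constant $K_p$ in $\|w\|_{2+2/p}^{2+2/p}\le K_p\,\nrmrd{\nabla w}2^2\,\|w\|_{2/p}^{2/p}$ does \emph{not} tend to $0$: for fixed compactly supported $w$ one has $\|w\|_{2+2/p}^{2+2/p}\to\|w\|_2^2$ and $\|w\|_{2/p}^{2/p}=\int_{\R^2}|w|^{2/p}\,dx\to|\{w\neq 0\}|$ as $p\to\infty$, so the limiting inequality is the Faber--Krahn inequality $\|w\|_2^2\le K_\infty\,\nrmrd{\nabla w}2^2\,|\{w\neq 0\}|$ with $K_\infty>0$. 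Hence the admissible mass at level $p$ shrinks like $1/p$ and the iteration along $p=2^k$ collapses: no single $M_1>0$ survives all levels. This cannot be repaired by tracking constants more carefully; you need a different mechanism. Two standard ones: (a) replace the total mass $M$ in the absorption step by $\int_{\{u>K\}}u\,dx$, which the entropy bound makes uniformly small in $t$ for $K$ large, so that every level $p$ can be absorbed without shrinking the mass threshold (this is how \cite{JL,BDP} obtain all $L^p$ bounds); or (b) run your argument only for one finite $p_0$ under a smallness condition, and then pass from $L^{p_0}$ decay to $L^\infty$ decay by a single Duhamel step using the already-known decay of $u\,\nabla v$ --- which is, in substance, what the paper does in one stroke.
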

\begin{proof} The result of Lemma~\ref{Lem:Uniform-u} is based on the \emph{method of the trap,\/} which amounts to prove that $H(t\nrmrd{u(\cdot,t)}\infty,M)\le 0$ where $z\mapsto H(z,M)$ is a continuous function which is negative on $[0,z_1)$ and positive on $(z_1,z_2)$ for some $z_1$, $z_2$ such that $0<z_1<z_2<\infty\,$. Since $t\mapsto t\nrmrd{u(\cdot,t)}\infty$ is continuous and takes value $0$ at $t=0$, this means that $t\nrmrd{u(\cdot,t)}\infty\le z_1\le z_0(M)$ for any $t\ge 0$, where $H(z_0(M),M)=\sup_{z\in[z_1,z_2]}H(z,M)\ge 0\,$. See Fig.~\ref{fig:1}.

Fix some $t_0 > 0\,$. By Duhamel's formula, a solution of \eqref{Eqn:Keller-Segel} can be written as
\be{Eqn:Duhamelov}
u(x,t_0+t)=\int_{\R^2} N(x-y,t)\,u(y,t_0)\;dy + \int_0^t\!\int_{\R^2} N(x-y,t-s)\;\nabla \cdot\left[u(y,t_0+s)\,\nabla v(y,t_0+s)\right]\;dy\;ds
\ee
where $N(x,t)=\frac 1{4\pi t}\,e^{-|x|^2/(4t)}$ denotes the heat kernel. Next observe that
\[
\int_0^t\!\int_{\R^2}\kern -3pt N(x-y,t-s)\;\nabla\cdot\left[u(y,t_0+s)\,\nabla v(y,t_0+s)\right]\;dy\;ds=\kern -3pt\sum_{i=1,2}\int_0^t\frac{\partial N}{\partial x_i}(\cdot,t-s)*\Big[\Big(u\,\frac{\partial v}{\partial x_i}\Big) (\cdot, t_0+s)\Big]\;ds\;.
\]
Taking $L^\infty$ norms in (\ref{Eqn:Duhamelov}) with respect to the space variable, we arrive at
\[
\nrmrd{u(\cdot ,t_0+t)}\infty \le \frac 1{4\pi t}\nrmrd{u(\cdot,t_0)}1 + \sum_{i=1,2}\int_0^t \nrmrd{\frac {\partial N}{\partial x_i} (\cdot,t-s)*\Big[\Big(u\,\frac{\partial v}{\partial x_i}\Big) (\cdot, t_0+s)\Big]\,}\infty\;ds\;.
\]
We now consider the convolution term. By Young's inequality and because of the expression for the kernel $N$, we can bound it using $\kappa_\sigma=\nrmrd{\partial N/\partial x_i\,(\cdot, 1)}\sigma$ by
\begin{multline*}
\int_0^t \nrmrd{\frac {\partial N}{\partial x_i} (\cdot,t-s)*\Big[\Big(u\,\frac{\partial v}{\partial x_i}\Big) (\cdot, t_0+s)\Big]\,}\infty\;ds\\
\le \int_0^t \left\| \frac {\partial N}{\partial x_i}(\cdot,t-s)\right\|_{L^\sigma(\R^2)}\, \left\| \Big(u\,\frac{\partial v}{\partial x_i}\Big)(\cdot, t_0+s) \right\|_{L^\rho(\R^2)}\;ds
\\
=\kappa_\sigma\int_0^t (t-s)^{-(1-\frac 1{\sigma}) -\frac 12} \left\| \Big(u\,\frac{\partial v}{\partial x_i}\Big)(\cdot, t_0+s) \right\|_{L^\rho(\R^2)}\;ds
\end{multline*}
where $1/\sigma + 1/\rho =1\,$. To enforce integrability later, we impose $\sigma < 2\,$. On the one hand
\[
\left\| \Big(u\,\frac{\partial v}{\partial x_i}\Big)(\cdot , t_0+s) \right\|_{L^\rho(\R^2)} \le \left\| u (\cdot, t_0 + s) \right\|_{L^p(\R^2)}\,\left\| \frac{\partial v}{\partial x_i} (\cdot, t_0+s) \right\|_{L^q(\R^2)}
\]
with $1/p + 1/q = 1/\rho\,$, by H\"older's inequality, whereas, on the other hand, 
\[
\left\| \frac{\partial v}{\partial x_i} (\cdot, t_0+s) \right\|_{L^q(\R^2)} \le \frac{C_{\rm HLS}}{2\pi}\, \left\| u (\cdot, t_0+s)\right\|_{L^r(\R^2)}
\]
with $1/r - 1/q = 1/2\,$, by the HLS inequality. Here $\nabla v$ is given by the convolution of $u$ with the function $x\mapsto -x_i/(2\pi|x|^2)$ and $C_{\rm HLS}$ denotes the optimal constant for the HLS inequality. Collecting all these estimates and using the fact that $\nrmrd{u(\cdot, t)}1 = M$ for any $t \ge 0$, we arrive at
\begin{eqnarray*}
&&\hspace*{-30pt}\nrmrd{u(\cdot ,t_0+t)}\infty-\frac M{4\pi t}
\\
&\le& \frac{\kappa_\sigma\,C_{\rm HLS}}\pi\, M^{\frac 1{p} +\frac 1{r}} \int_0^t (t-s)^ {-(1-\frac 1{\sigma}) -\frac 12} \nrmrd{u (\cdot,t_0 + s)}\infty^{2-\frac 1p-\frac 1r}ds
\\
&&\hspace*{6pt}= \frac{\kappa_\sigma\,C_{\rm HLS}}\pi\, M^{\frac 1{p} +\frac 1{r}} \int_0^t (t-s)^{\frac 1\sigma-\frac 32}\,(t_0+s)^{\frac 1p+\frac 1r-2}\, \Big[ (t_0+s) \nrmrd{u (\cdot, t_0 + s)}\infty \Big]^{2-\frac 1p-\frac 1r}ds\;.
\end{eqnarray*}
Now take $t_0 = t\,$, and multiply the inequality by $2t\,$ to get
\begin{multline*}
2t\,\nrmrd{u(\cdot ,2t)}\infty-\frac M{2\pi}\\
\le\frac{2\,\kappa_\sigma\,C_{\rm HLS}}\pi\, M^{\frac 1{p} +\frac 1{r}}\,t\!\int_0^t (t-s)^{\frac 1\sigma-\frac 32}\,(t+s)^{\frac 1p+\frac 1r-2}\, \Big[ (t+s) \nrmrd{u (\cdot, t + s)}\infty \Big]^{2-\frac 1p-\frac 1r}ds\;.
\end{multline*}
Observe that for any $t>0$ we have
\[
\sup_{0\le s \le t} (t+s) \nrmrd{u (\cdot, t + s)}\infty \le \sup_{0\le s \le t} 2s\,\nrmrd{u (\cdot, 2s)}\infty=:\psi(t)\;,
\]
whereas $\frac 1\sigma-\frac 32=-\frac 1p-\frac 1r$ and 
\[
t\int_0^t (t-s)^{\frac 1\sigma-\frac 32}\,(t+s)^{\frac 1p+\frac 1r-2}ds\;=\;\frac\sigma{2-\sigma}\;.
\]
{} From Duhamel's formula \eqref{Eqn:Duhamelov}, it follows that $u\in C^0(\R^+,L^\infty(\R^2)$ and $\psi$ is continuous. Hence we have
\[
\psi (t) \le \frac M{2\pi} + C_0\,\big(\psi (t) \big)^\theta\quad\mbox{with}\quad C_0=\frac{2\,\kappa_\sigma\,C_{\rm HLS}}\pi\, M^{\frac 1{p} +\frac 1{r}}\,\frac{\sigma}{2-\sigma}\;,\quad\theta=2-\frac 1p-\frac 1r\;.
\]
Consider the function $H(z,M) = z-C_0\,z^\theta - M/(2\pi)\,$, so that $H(\psi(t),M)\le 0\,$ and notice that $\theta>1\,$. For $M>0$ fixed, $z \mapsto H(z,M)$ achieves its maximum $H(z_0(M),M)=\tfrac{\theta-1}\theta\,(C_0\,\theta)^{1/(1-\theta)}-\tfrac M{2\pi}$ at $z = z_0 (M) = (C_0\,\theta)^{1/(1-\theta)}$. For $M$ small enough, as we shall see below, $H(z_0(M),M) > 0\,$. Since $\psi$ is continuous and $\psi(0)=0$ then $\psi(t)<z_0(M)$ for any $t\ge 0\,$. This provides an $L^\infty$ estimate on $\psi$ which is uniform in $t\ge 0\,$. 
\begin{figure}[ht]\begin{center}\includegraphics[height=4cm]{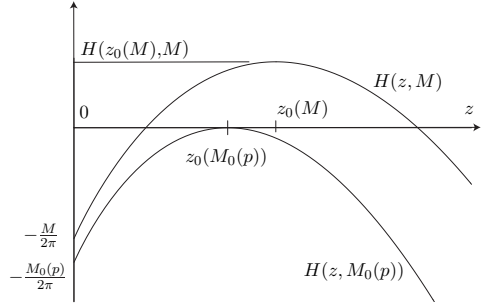}\end{center}\caption{The \emph{method of the trap\/} amounts to prove that $H(z,M)\le 0$ implies that $z=\psi(t)$ is bounded by $z_0(M)$ as long as $H(z_0(M),M)>0$, {\emph i.e.\/} for $M<M_0(p)$. For some $p>4$, the plots of the functions $z\mapsto H(z,M)$ with $M<M(p)$ and $z\mapsto H(z,M_0(p))$ are shown above.}\label{fig:1}\end{figure}

\noindent Recall that the exponents $\sigma\,$, $\rho\,$, $p\,$, $q\,$ and $r$ are related by 
\[\left\{\begin{array}{ll}
\frac 1{\sigma}+\frac 1{\rho}=1\;,\quad& 1<\sigma <2\;,\vspace{.2cm}\\
\frac 1{p} + \frac 1{q} = \frac 1{\rho}\;,& p\,,\;q >2\;,\vspace{.2cm}\\
\frac 1{r} - \frac 1{q} = \frac 12\;,&r>1\;.
\end{array}\right.\]
For the choice $r=4/3\,$, $q=4\,$, it is known, see \cite{MR717827}, that the optimal constant in the HLS inequality is $C_{\rm HLS}=2\sqrt\pi\,$. As a consequence, we have $C_0=\tfrac{4\kappa_\sigma}{\sqrt\pi}\, M^{\frac 1{p} +\frac 14}\,\frac{\sigma}{2-\sigma}$, with $\sigma=\tfrac{4p}{3p-4}\,$. The exponent $p>4$ still has to be chosen. A tedious but elementary computation shows that  there exists $M_0(p)$ such that $H(z_0(M),M)>0$ if and only if $M<M_0(p)$ and $\sup_{p\in(4,+\infty)}M_0(p)=\lim_{p\to+\infty}M_0(p)\approx 0.822663\,$.
\end{proof}

A simple interpolation argument then gives the following corollary.
\begin{corollary}
\label{S2T2}
For any mass $M<M_1$ and all $p\in [1, \infty]\,$, there exists a positive constant $C=C(p, M)$ with $\lim_{M\to 0_+}C(p, M)=0\,$, such that, if $u$ is a solution of \eqref{Eqn:Keller-Segel} as in Lemma~\ref{Lem:Uniform-u},
\[
\nrmrd{u(t)}p \le C\, t^{-(1-\frac{1}{p})}\quad\forall\;t>0\;.
\]
\end{corollary}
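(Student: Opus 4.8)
The plan is to interpolate between the two extreme Lebesgue norms whose behaviour is already under control. Mass conservation provides the uniform-in-time bound $\nrmrd{u(t)}1 = M$, while Lemma~\ref{Lem:Uniform-u} gives the decaying bound $\nrmrd{u(t)}\infty \le C(M)\,t^{-1}$. These are exactly the endpoints $p=1$ and $p=\infty$ of the desired estimate, carrying the rates $t^{0}$ and $t^{-1}$; the intermediate rate $t^{-(1-1/p)}$ is precisely the one dictated by the standard Lyapunov interpolation inequality between $L^1$ and $L^\infty$, so no new analytic input is required beyond what Lemma~\ref{Lem:Uniform-u} already furnishes.

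Concretely, for $p\in(1,\infty)$ I would write $1/p = \theta\cdot 1 + (1-\theta)\cdot 0$ with $\theta = 1/p$, giving
\[
\nrmrd{u(t)}p \le \nrmrd{u(t)}1^{1/p}\,\nrmrd{u(t)}\infty^{1-1/p}\;.
\]
Substituting the endpoint bounds yields
\[
\nrmrd{u(t)}p \le M^{1/p}\,\big(C(M)\big)^{1-1/p}\,t^{-(1-1/p)}\;,
\]
so the corollary holds with $C(p,M) := M^{1/p}\,\big(C(M)\big)^{1-1/p}$, where $C(M)$ is the constant of Lemma~\ref{Lem:Uniform-u}. The endpoint cases $p=1$ and $p=\infty$ then follow directly from mass conservation and from Lemma~\ref{Lem:Uniform-u} respectively, upon adopting the conventions $M^{1/\infty}=1$ and $\big(C(M)\big)^{0}=1$.

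The only remaining point, and the one I expect to be the genuine obstacle, is to verify that $\lim_{M\to 0_+} C(p,M) = 0$ for every $p\in[1,\infty]$. For $p<\infty$ this is immediate: the prefactor $M^{1/p}\to 0$, and it suffices to know that $C(M)$ stays bounded as $M\to 0_+$. The delicate case is $p=\infty$, where $C(\infty,M)=C(M)$ and one must show that the $L^\infty$ constant produced by Lemma~\ref{Lem:Uniform-u} \emph{itself} vanishes in the small-mass limit. Here I would revisit the trap argument: the sharp bound it produces is $t\,\nrmrd{u(t)}\infty \le z_1(M)$, the \emph{smaller} root of $H(\cdot,M)$, i.e. the first zero of $z\mapsto z - C_0\,z^\theta - M/(2\pi)$ (the continuous function $\psi$ starts at $0$ and, staying in $\{H\le 0\}=[0,z_1]\cup[z_2,\infty)$, cannot leave $[0,z_1]$). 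Since $\theta>1$ and $C_0\to 0$ as $M\to 0_+$, this root satisfies $z_1(M) = M/(2\pi) + O(M^\theta)$, whence $C(M)=z_1(M)\to 0$. This confirms $\lim_{M\to 0_+}C(p,M)=0$ for all $p$, and shows that the substance of the corollary lies not in the interpolation itself but in correctly tracking the smallness of the $L^\infty$ constant through the smaller trap root.
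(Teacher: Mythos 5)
Your proof is correct and follows the same route as the paper, which disposes of this corollary with the single phrase ``a simple interpolation argument'': you interpolate $\nrmrd{u(t)}p \le \nrmrd{u(t)}1^{1/p}\nrmrd{u(t)}\infty^{1-1/p}$ between mass conservation and Lemma~\ref{Lem:Uniform-u}. Your additional observation for the case $p=\infty$ --- that the vanishing of $C(p,M)$ as $M\to 0_+$ must be read off from the \emph{smaller} trap root $z_1(M)=M/(2\pi)+o(M)$ rather than from $z_0(M)=(C_0\,\theta)^{1/(1-\theta)}$, which in fact blows up as $M\to 0_+$ --- is a genuine and correct detail that the paper leaves implicit (cf.\ Lemma~\ref{Lem:smallness}, whose proof is ``left to the reader'').
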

\begin{remark}\rm
\label{estimationlp}
Similar decay rates for the $L^p$ norms of the solutions to global Keller-Segel systems have been obtained in a large number of previous references, but always  in slightly different situations. For instance, in \cite{MR2035446}, the authors consider a parabolic-parabolic Keller-Segel system with small and regular initial data. More recently, in \cite{MR2404241} a parabolic-parabolic Keller-Segel system is considered for small initial data and spatial dimension $d\ge 3\,$. On the other hand, a parabolic-elliptic system is treated in \cite{MR2333473} where the equation for the chemo-attractant is slightly different from ours.
\end{remark}

\begin{remark}\rm
The rates obtained in Corollary~\ref {S2T2} are optimal as can easily be checked using the self-similar solutions $(n_\infty,\,c_\infty)$ of \eqref{Eqn:Keller-Segel-Rescaled} defined in Section~\ref{Sec:Intro}. This is the subject of the next section.
\end{remark}

\section{$L^p$ and $H^1$ estimates in the self-similar variables}\label{Sec:Estimates}

Consider now the solution $(n, c)$ defined in the introduction by
(\ref{Eqn:ChangeVar}) and solving (\ref{Eqn:Keller-Segel-Rescaled}). By Corollary \ref{S2T2} we immediately deduce that, for any $p \in (1,\infty]\,$,
\begin{equation}
\label{S3E1}
\nrmrd{n(t)}p \le C_1\quad \forall\;t>0
\end{equation}
for some positive constant $C_1\,$. A direct estimate gives
\[
2\pi\,\left\|\nabla c(t) \right\|_{L^\infty}\le\sup_{x\in\R^2}\int_{\R^2}\frac{n(t,y)}{|x-y|}\;dy\le\underbrace{\sup_{x\in\R^2}\int_{|x-y|\ge 1}\frac{n(t,y)}{|x-y|}\;dy}_{\le M}+\underbrace{\sup_{x\in\R^2}\int_{|x-y|\le 1}\frac{n(t,y)}{|x-y|}\;dy}_{\le\left(2\pi\,\frac{p-1}{p-2}\right)^\frac p{p-1}\, \nrmrd np}
\]
where the last term has been evaluated by H\"older's inequality with $p>2\,$. Hence we obtain
\begin{equation}
\label{S3E2}
\nrmrd{\nabla c(t)}\infty \le C_2\quad \forall\;t>0\;.
\end{equation}
\begin{lemma}
\label{Lem:smallness}
In \eqref{S3E1} and \eqref{S3E2}, the constants $C_1$ and $C_2$ depend on $M$ and are such that
\[
\lim_{M\to 0_+}C_i(M)=0\quad i=1\,,\;2\;.
\]
\end{lemma}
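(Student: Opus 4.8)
The plan is to trace the dependence on $M$ through the rescaling, showing that the smallness is entirely inherited from the constant $C(p,M)$ of Corollary~\ref{S2T2}. First I would make the $M$-dependence of $C_1$ explicit. Write $s$ for the original time associated with the self-similar time $t$ through \eqref{Eqn:ChangeVar}, so that $R^2=1+2s=e^{2t}$. Substituting $x=R\,\xi$ in the integral defining the norm, \eqref{Eqn:ChangeVar} gives, for every $p$,
\[
\nrmrd{n(t)}p = R^{2\left(1-\frac 1p\right)}\,\nrmrd{u(s)}p\;.
\]
Corollary~\ref{S2T2} bounds the right-hand side by $C(p,M)\,R^{2(1-1/p)}s^{-(1-1/p)}=C(p,M)\,(R^2/s)^{1-1/p}$, hence, using $R^2/s=2+1/s$,
\[
\nrmrd{n(t)}p \le C(p,M)\,\Big(2+\tfrac 1s\Big)^{1-\frac 1p}\;.
\]
The factor $(2+1/s)^{1-1/p}$ is independent of $M$ and bounded over the relevant range of times: it decreases to $2^{1-1/p}$ as $s\to\infty$ and is controlled once $s$ (equivalently the self-similar time) is bounded away from $0$, in accordance with the $L^\infty$ bound in Theorem~\ref{Thm:Main}. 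Therefore $C_1(M)$ is $C(p,M)$ times this fixed factor, and since $\lim_{M\to 0_+}C(p,M)=0$ I conclude $\lim_{M\to 0_+}C_1(M)=0$.

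Next I would treat $C_2$ by reusing the splitting of $\nabla c$ into a far and a near part displayed just before \eqref{S3E2}. The far part, $\sup_x\int_{|x-y|\ge 1}n(t,y)/|x-y|\,dy$, is bounded by the mass $M$ and so tends to $0$ as $M\to 0_+$. The near part is estimated by H\"older's inequality (with $p>2$) against $\nrmrd{n(t)}p$, giving a bound $K_p\,\nrmrd{n(t)}p$ with $K_p$ an explicit constant independent of $M$. Combining the two,
\[
2\pi\,\nrmrd{\nabla c(t)}\infty\le M+K_p\,\nrmrd{n(t)}p\le M+K_p\,C_1(M)\;,
\]
so $C_2(M)\le\frac 1{2\pi}\big(M+K_p\,C_1(M)\big)$. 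The first summand vanishes with $M$ trivially and the second by the first part of the proof, whence $\lim_{M\to 0_+}C_2(M)=0$.

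The argument is essentially bookkeeping of constants: all the analytic content already sits in Lemma~\ref{Lem:Uniform-u} and Corollary~\ref{S2T2}, whose constants were shown to vanish as $M\to 0_+$. The only point that requires a little care — and that I regard as the single, mild obstacle — is to check that the scaling factor $(2+1/s)^{1-1/p}$ produced when passing to self-similar variables stays bounded in time by a quantity independent of $M$, so that the smallness of $C(p,M)$ is not destroyed. Once this is observed, the conclusions for both $C_1$ and $C_2$ are immediate.
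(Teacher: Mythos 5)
Your proof is correct and is precisely the retracing of the earlier computations that the paper itself invokes (its proof of Lemma~\ref{Lem:smallness} is literally ``left to the reader''): $C_1(M)$ is $C(p,M)$ from Corollary~\ref{S2T2} times an $M$-independent scaling factor, and $C_2(M)\le\frac1{2\pi}(M+K_p\,C_1(M))$. Your side remark that the factor $(2+1/s)^{1-1/p}$ is only controlled for self-similar times bounded away from $0$ is a fair and correct reading of how \eqref{S3E1} must be interpreted.
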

\begin{proof}This result can easily be retraced in the above computations. Details are left to the reader.\\ \end{proof}
With $K=K(x)=e^{|x|^2/2}$, let us rewrite the equation for $n$ as
\begin{equation}
\label{S3E3}
\frac{\partial n}{\partial t}-\frac{1}{K}\,\nabla \cdot \left(K\, \nabla n\right)=-\nabla c\cdot \nabla n +2n+n^2\;.
\end{equation}
We are now interested in the bounds satisfied by the function $n(t)$ in the weighted spaces $L^2(K)$ and $H^1(K)\,$.
\begin{proposition}
\label{S3T1}
For all masses $M\in(0,M_1)$, there exists a positive constant $C$ such that, if $n$ is a solution of \eqref{S3E3} with initial data $n_0\in L^2(K)$ satisfying \eqref{eq:2}, then
\[
\left\| n(t) \right\|_{L^2(K)} \le C\quad \forall\;t>0\;.
\]
\end{proposition}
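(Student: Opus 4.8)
The plan is to run a weighted $L^2$ energy estimate directly on \eqref{S3E3}, all computations being understood as justified by the truncation procedure mentioned in the introduction. First I would multiply \eqref{S3E3} by $n\,K$ and integrate over $\R^2$. Integrating the diffusion term by parts (so that $\int_{\R^2} n\,\nabla\cdot(K\nabla n)\,dx = -\int_{\R^2}|\nabla n|^2 K\,dx$), the left-hand side produces $\tfrac12\tfrac{d}{dt}\int_{\R^2} n^2 K\,dx + \int_{\R^2}|\nabla n|^2 K\,dx$, while the forcing $2n+n^2$ contributes $2\int_{\R^2} n^2K\,dx+\int_{\R^2} n^3 K\,dx$. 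The drift term I would integrate by parts using $\nabla K = x\,K$ and $\Delta c=-n$: since $n\,\nabla n=\tfrac12\nabla(n^2)$,
\[
-\int_{\R^2} n\,K\,\nabla c\cdot\nabla n\,dx=\tfrac12\int_{\R^2} n^2\,\nabla\cdot(K\nabla c)\,dx=\tfrac12\int_{\R^2} n^2 K\,(x\cdot\nabla c-n)\,dx ,
\]
so that the cubic contributions partly cancel and the identity becomes
\[
\tfrac12\tfrac{d}{dt}\int_{\R^2} n^2 K\,dx+\int_{\R^2}|\nabla n|^2 K\,dx=2\int_{\R^2} n^2 K\,dx+\tfrac12\int_{\R^2} n^3 K\,dx+\tfrac12\int_{\R^2} n^2 K\,x\cdot\nabla c\,dx .
\]

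The key algebraic fact I would exploit is that the weighted Dirichlet form is conjugate to a harmonic oscillator. Setting $w:=n\,\sqrt K=n\,e^{|x|^2/4}$ and completing the square gives
\[
\int_{\R^2}|\nabla n|^2 K\,dx=\int_{\R^2}|\nabla w|^2\,dx+\int_{\R^2} w^2\,dx+\tfrac14\int_{\R^2}|x|^2 w^2\,dx .
\]
Since the bottom of the spectrum of $-\Delta+\tfrac14|x|^2$ on $\R^2$ is $1$, one has $\int_{\R^2}(|\nabla w|^2+\tfrac14|x|^2 w^2)\,dx\ge\int_{\R^2} w^2\,dx$, hence $\int_{\R^2}|\nabla n|^2 K\,dx\ge 2\int_{\R^2} n^2 K\,dx$, which exactly matches the dilation forcing $2\int_{\R^2} n^2 K\,dx$; moreover the dissipation simultaneously controls the second moment $\tfrac14\int_{\R^2}|x|^2 n^2 K\,dx$. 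To gain the strict coercivity needed beyond this borderline balance I would use the conserved mass: in the $w$ variable $\int_{\R^2} n\,dx=\langle w,e^{-|x|^2/4}\rangle=M$ pins the component of $w$ along the ground state $e^{-|x|^2/4}$, so the spectral gap (the second eigenvalue being $2$) upgrades the estimate to $\int_{\R^2}|\nabla n|^2 K\,dx\ge 2\int_{\R^2} n^2 K\,dx+\big(\int_{\R^2} n^2 K\,dx-Y_0\big)$, where $Y_0:=M^2/\|e^{-|x|^2/4}\|_{L^2(\R^2)}^2$ is a fixed constant.

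It remains to absorb the two genuinely nonlinear and transport terms. For the cubic term I would use the uniform $L^\infty$ bound \eqref{S3E1}, $\tfrac12\int_{\R^2} n^3 K\,dx\le\tfrac{C_1}2\int_{\R^2} n^2 K\,dx$. For the drift term I would use \eqref{S3E2} and $|x\cdot\nabla c|\le C_2\,|x|$ together with Young's inequality $|x|\le\tfrac{\epsilon}2|x|^2+\tfrac1{2\epsilon}$, so that $\tfrac12\int_{\R^2} n^2 K\,x\cdot\nabla c\,dx$ is bounded by a small multiple of the second moment $\int_{\R^2}|x|^2 n^2 K\,dx$—dominated by the dissipation—plus a multiple of $\int_{\R^2} n^2 K\,dx$. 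Writing $Y(t):=\int_{\R^2} n^2 K\,dx=\|n(t)\|_{L^2(K)}^2$ and collecting all terms, optimising in $\epsilon$, one is led to a differential inequality of the form
\[
\tfrac{d}{dt}Y+\lambda\,Y\le\Lambda ,\qquad \lambda=2\big(1-\sqrt2\,C_2-\tfrac12 C_1\big),\quad\Lambda=2Y_0 .
\]
Since $C_1$ and $C_2$ tend to $0$ as $M\to0$ by Lemma~\ref{Lem:smallness}, for $M<M_1$ the constant $\lambda$ is positive, and Gr\"onwall's lemma yields $Y(t)\le\max\{Y(0),\Lambda/\lambda\}$, the claimed uniform bound.

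I expect the main obstacle to be the transport term $\int_{\R^2} n^2 K\,x\cdot\nabla c\,dx$, which carries one extra power of $|x|$ relative to the quantity being estimated: the whole scheme closes only because the harmonic-oscillator structure of the weighted diffusion furnishes control of the second moment $\int_{\R^2}|x|^2 n^2 K\,dx$, allowing this term to be absorbed. The second delicate point is that the ground-state eigenvalue of $-\Delta+\tfrac14|x|^2$ matches the dilation forcing $2n$ only marginally, so strict dissipativity $\lambda>0$ relies both on the mass-induced spectral gap and on the smallness of $C_1,C_2$; this is exactly where the restriction $M<M_1$ enters.
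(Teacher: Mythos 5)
Your energy identity and the harmonic-oscillator computation are correct (including the nice cancellation obtained by integrating the drift term by parts with $\Delta c=-n$, and the use of the conserved mass to pin the ground-state component of $w=n\,e^{|x|^2/4}$; note only that you must split the dissipation, keeping a fraction $\theta$ of $\tfrac14\int|x|^2w^2\,dx$ for the moment control and using $1-\theta$ for the spectral-gap bound, which is implicitly what produces your constant $\sqrt2\,C_2$). The genuine gap is in the last step: you assert that $\lambda=2\bigl(1-\sqrt2\,C_2-\tfrac12C_1\bigr)>0$ ``for $M<M_1$'' because $C_1,C_2\to0$ as $M\to0_+$. Lemma~\ref{Lem:smallness} gives only the limit at $M=0_+$, not any quantitative bound on $C_1(M)$, $C_2(M)$ over the whole interval $(0,M_1)$; for $M$ close to $M_1$ these constants are merely finite, and nothing prevents $\sqrt2\,C_2+\tfrac12C_1\ge1$. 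Your scheme is borderline by construction --- the ground-state energy of $-\Delta+\tfrac14|x|^2$ exactly cancels the dilation forcing $2\int n^2K\,dx$, so all the strict dissipativity must come from the mass-induced gap, and that gap is then eaten by $C_1$ and $C_2$ at fixed, not adjustable, rates. As written, you prove the proposition only on some interval $(0,M_1')$ with possibly $M_1'<M_1$.

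The paper closes this by a different mechanism: the Escobedo--Kavian inequality $\int_{\R^2}n^2K\,dx\le\varepsilon\int_{\R^2}|\nabla n|^2K\,dx+C(\varepsilon,q)\,\|n\|_{L^q(\R^2)}^2$, valid for every $\varepsilon>0$. Since $\varepsilon$ is free, each lower-order term ($2\int n^2K$, $\int n^3K\le C_1\int n^2K$, and the drift term after Cauchy--Schwarz with $\|\nabla c\|_\infty\le C_2$) can be absorbed into an arbitrarily small fraction of $\int|\nabla n|^2K\,dx$ \emph{however large} $C_1$ and $C_2$ are, the price being paid in the bounded additive constant $C(\varepsilon,q)\,\|n\|_{L^q}^2$. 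The Poincar\'e inequality $\int n^2K\,dx\le\tfrac12\int|\nabla n|^2K\,dx$ (your ground-state bound) then yields $\frac{d}{dt}Y+cY\le C$ for all $M\in(0,M_1)$. To repair your argument you would either need to invoke this interpolation inequality in place of the sharp spectral balance, or restate the conclusion for a smaller mass range --- which would still suffice for Theorem~\ref{Thm:Main}, but does not prove Proposition~\ref{S3T1} as stated.
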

\begin{proof}
We multiply the equation (\ref{S3E3}) by $n\, K$ and integrate by parts to obtain
\begin{equation}
\label{S3E4}
\frac{1}{2}\,\frac d{dt}\int_{\R^2}|n|^2\;K\,dx+\int_{\R^2} |\nabla n|^2\;K\,dx=-\int_{\R^2} n \nabla c\cdot \nabla n\;K\,dx+2 \int_{\R^2} n^2\;K\,dx+\int_{\R^2} n^3\;K\,dx\;.
\end{equation}
As in \cite[Corollary 1.11]{MR913672}, we recall that for any $q>2$ and $\varepsilon>0\,$, there exists a positive constant $C(\varepsilon, q)$ such that
\[
\int_{\R^2} n^2\;K\,dx\le \varepsilon \int_{\R^2}|\nabla n|^2\;K\,dx+C(\varepsilon,q)\,\|n\|_{L^q(\R^2)}^2\,.
\]
This estimate, (\ref{S3E1}) and (\ref{S3E2}) give a bound of the right hand side of \eqref{S3E4}, namely
\[
\left|\;-\int_{\R^2}n\nabla c\cdot\nabla n\;K\,dx+2 \int_{\R^2} n^2\;K\,dx+\int_{\R^2} n^3\;K\,dx\;\right|\le\varepsilon\int_{\R^2}|\nabla n|^2\;K\,dx+C
\]
up to the multiplication of $\varepsilon$ by a constant that we omit for simplicity, from which we deduce that,
\[
\frac{1}{2}\frac{d}{dt}\int_{\R^2}|n|^2\;K\,dx+(1-\varepsilon)\int_{\R^2} |\nabla n|^2\;K\,dx \le C\;.
\]
We finally use the classical inequality, which is easily recovered by expanding the square in $\int_{\R^2}|\nabla (n\,K)|^2\,K^{-1}\,dx\ge 0\,$, namely
\[
\int_{\R^2} |n|^2\;K\,dx\le \frac 12\int_{\R^2} |\nabla n|^2\;K\,dx
\]
as in \cite{MR913672} to obtain a uniform bound of $n(t)$ in $L^2(K)\,$.\end{proof}

Next we deduce a uniform bound in $H^1(K)\,$.
\begin{corollary} 
\label{S3T2}
Under the assumptions of Proposition~\ref{S3T1}, there exists $T>0$ and $C>0$ such that
\[
\left\| n(t) \right\|_{H^1(K)} \le C\,\max\left\{1,\tfrac{\sqrt T}{\sqrt t}\right\}\quad \forall\;t>0\;.
\]
\end{corollary}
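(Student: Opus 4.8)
The plan is to observe that, since Proposition~\ref{S3T1} already controls $\|n(t)\|_{L^2(K)}$ uniformly in $t\,$, the whole matter reduces to estimating the Dirichlet part $g(t):=\int_{\R^2}|\nabla n(t)|^2\,K\,dx$ and producing for it exactly the $\max\{1,T/t\}$ behaviour. The natural energy identity comes from testing \eqref{S3E3} against $-\tfrac1K\nabla\cdot(K\nabla n)$ with weight $K\,$; after one integration by parts in the parabolic term one obtains
\be{plan1}
\tfrac12\,g'(t)+\int_{\R^2}\Big(\tfrac1K\nabla\cdot(K\nabla n)\Big)^2K\,dx=\int_{\R^2}\big(\nabla c\cdot\nabla n-2n-n^2\big)\,\tfrac1K\nabla\cdot(K\nabla n)\,K\,dx\;.
\ee
First I would bound the right-hand side by Young's inequality, absorbing half of the second-order term on the left and estimating the remainder through the bounds already at hand: $\|\nabla c\|_\infty\le C_2$ from \eqref{S3E2} gives $\int|\nabla c|^2|\nabla n|^2K\,dx\le C_2^2\,g(t)\,$, while $\int n^2K\,dx$ is controlled by Proposition~\ref{S3T1} and $\int n^4K\,dx\le\|n\|_\infty^2\int n^2K\,dx$ by \eqref{S3E1}. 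Together with the weighted Poincaré-type inequalities $\int n^2K\le\tfrac12\int|\nabla n|^2K$ and its second-order analogue $\int|\nabla n|^2K\le\tfrac12\int(\tfrac1K\nabla\cdot(K\nabla n))^2K$ (both obtained by expanding a nonnegative weighted square, as in the proof of Proposition~\ref{S3T1}), this yields a differential inequality $g'(t)\le a\,g(t)+b$ with explicit constants $a,b$ depending only on $M\,$.

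A direct application of Gronwall's lemma would only give exponential growth, and in any case $g(0)=\int|\nabla n_0|^2K\,dx$ need not be finite under \eqref{eq:2}; this is where both the constant $T$ and the smoothing factor $t^{-1}$ enter. The missing ingredient is a time-integrated bound on $g$ that is uniform in the starting point: integrating the $L^2(K)$ energy inequality of Proposition~\ref{S3T1} over an interval $[t,t+T]$ and using the uniform bound on $\|n\|_{L^2(K)}$ gives $\int_t^{t+T}g(s)\,ds\le C(T)$ for every $t\ge0\,$. Feeding this into the uniform Gronwall lemma, with time window $r=T\,$, converts $g'\le a\,g+b$ into a bound $g(t)\le C$ valid for all $t\ge T\,$.

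For the short-time regime $0<t<T$ I would instead time-weight the estimate: setting $\phi(t):=t\,g(t)$ one has $\phi'=g+t\,g'\le(1+aT)\,g+b\,T$ on $(0,T]\,$, so integrating from $0$ (where $\phi(0)=0$) and using the same integrated bound $\int_0^Tg\le C(T)$ yields $t\,g(t)\le C(T)\,$, that is $g(t)\le C(T)/t\,$. Combining the two regimes with Proposition~\ref{S3T1} gives $\|n(t)\|_{H^1(K)}^2=\|n(t)\|_{L^2(K)}^2+g(t)\le C\,\max\{1,T/t\}\,$, which is the claim after taking square roots. The main obstacle is not the algebra but the rigorous justification of \eqref{plan1}: the test function $\tfrac1K\nabla\cdot(K\nabla n)$ demands a priori regularity and decay that the hypotheses do not grant, so the identity must be derived on the approximating and truncated problems referred to in the introduction and then passed to the limit, the $t^{-1}$ loss being precisely the signature of this parabolic gain of one derivative away from $t=0\,$.
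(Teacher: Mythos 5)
Your argument is correct in substance but takes a genuinely different route from the paper. The paper works with the mild formulation: it writes $n$ via Duhamel's formula for the semigroup $S(t)$ generated by $-K^{-1}\nabla\cdot(K\,\nabla\cdot)$ on $L^2(K)$, invokes the smoothing estimate $\|S(t)h\|_{H^1(K)}\le\kappa\,(1+t^{-1/2})\,\|h\|_{L^2(K)}$, and then picks $T$ so small that $C_3\,(T+2\sqrt T)=\tfrac1{2\kappa}$, which allows the term $\int_0^t\big(1+(t-s)^{-1/2}\big)\|n(s+\tau)\|_{H^1(K)}\,ds$ to be absorbed into itself; this bounds $\|n(T+\tau)\|_{H^1(K)}$ uniformly in the shift $\tau>0$ and gives the stated estimate with an explicit $T$. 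You instead run a second-order energy estimate, testing \eqref{S3E3} against $K^{-1}\nabla\cdot(K\nabla n)$, and close with the time-integrated dissipation bound $\int_t^{t+T}\!\!\int_{\R^2}|\nabla n|^2K\,dx\,ds\le C(T)$ inherited from the proof of Proposition~\ref{S3T1}, the uniform Gronwall lemma for $t\ge T$, and the weight $t\mapsto t\int_{\R^2}|\nabla n(t)|^2K\,dx$ for $t\le T$. Two small repairs are needed in your version: you cannot assert $\phi(0)=0$, since $g$ may blow up at $t=0$; instead integrate from a sequence $t_k\to0$ with $t_k\,g(t_k)\to0$, which exists because $g$ is integrable near the origin. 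And the ``second-order analogue'' $\int|\nabla n|^2K\le\tfrac12\int\big(K^{-1}\nabla\cdot(K\nabla n)\big)^2K$ does not come from expanding a square; it follows from Cauchy--Schwarz combined with the first-order inequality ($\|\nabla n\|^2_{L^2(K)}=\langle Ln,n\rangle\le\|Ln\|_{L^2(K)}\|n\|_{L^2(K)}\le\tfrac1{\sqrt2}\|Ln\|_{L^2(K)}\|\nabla n\|_{L^2(K)}$). The trade-off between the two proofs is as you suspect: the paper's route never needs more than $L^2(K)$ control of the nonlinear terms and $H^1$ regularity of $n$, the only nontrivial input being the standard $L^2(K)\to H^1(K)$ smoothing of the linear semigroup, whereas your identity requires $n(t)$ to lie in the domain of the generator and must be justified on the truncated and approximated problems; in exchange, your route is more self-contained, reusing only the first-order energy identity already established and elementary Gronwall-type lemmas. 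Both produce the same $t^{-1/2}$ singularity at the origin, the signature of the parabolic gain of one derivative.
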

\begin{proof} Since $n$ is a classical solution of (\ref{S3E3}), it also solves the corresponding integral equation,
\[
n(t, x)=S(t)\,n_0(x)-\int_0^tS(t-s)\,(\nabla c\cdot \nabla n)(s)\;ds + \int_0^tS(t-s)\,(2n+n^2)(s)\;ds
\]
where $S(t)$ is the linear semi-group generated by the operator $-K^{-1}\,\nabla \cdot \left(K\,\, \nabla \cdot \right)$ on the space $L^2(K)\,$. Then
\[
\|n(t)\|_{H^1(K)}\le \|S(t)\,n
_0\|_{H^1(K)}+\int_0^t\|S(t-s)\,(\nabla c\cdot \nabla n)(s)\|_{H^1(K)}\;ds\\
+ \int_0^t \|S(t-s)\,(2n+n^2)(s)\|_{H^1(K)}\;ds
\]
Using $\|S(t)\,h\|_{H^1(K)}\le\kappa\,(1+t^{-1/2})\,\|h\|_{L^2(K)}$ for some $\kappa>0\,$, and \eqref{S3E2}, we obtain
\begin{eqnarray*}
\hspace*{-18pt}&&\frac1\kappa\left(\|n(t)\|_{H^1(K)}-\|S(t)\,n_0\|_{H^1(K)}\right)\\
\hspace*{-9pt}&&\le\int_0^t\left(1+\tfrac{1}{\sqrt {t-s}} \right) \|(\nabla c\cdot \nabla n)(s)\|_{L^2(K)}\;ds+ \int_0^t \left(1+\tfrac{1}{\sqrt {t-s}} \right) \|(2n+n^2)(s)\|_{L^2(K)}\;ds\\
\hspace*{-9pt}&&\le\int_0^t\left(1+\tfrac{1}{\sqrt {t-s}} \right)
\|\nabla c\|_{L^\infty(\R^2)}\|\nabla n\|_{L^2(K)}\;ds+ \int_0^t \left(1+\tfrac{1}{\sqrt {t-s}} \right) \left(2\,\|n\|_{L^2(K)}+
\|n\|_{L^\infty(\R^2)}\|n\|_{L^2(K)}\right)\;ds\\
\hspace*{-9pt}&&\le C_2\int_0^t\left(1+\tfrac{1}{\sqrt {t-s}} \right)
\|\nabla n(s)\|_{L^2(K)}\;ds+(2+C_1)\int_0^t \left(1+\tfrac{1}{\sqrt {t-s}} \right)
\|n(s)\|_{L^2(K)}\;ds
\end{eqnarray*}
with $C_1$ defined in~\eqref{S3E1} and $C_2$ in~\eqref{S3E2}. Hence, for any $\tau >0$ fixed, we have
\be{Ineq:Gronwall}
\frac1\kappa\,\|n(t+\tau)\|_{H^1(K)}\le\left(1+\tfrac 1{\sqrt t}\right)\,C_1+C_3\int_0^t\left(1+\tfrac{1}{\sqrt {t-s}} \right) \|n(s+\tau)\|_{H^1(K)}\;ds
\ee
with $C_3=\max\{C_2,2+C_1\}\,$. Let 
\[
H(T)=\sup_{t\in(0,T)}\int_0^t\left(1+\tfrac{1}{\sqrt {t-s}} \right) \|n(s+\tau)\|_{H^1(K)}\,ds\;.
\]
If we choose $T>0$ such that $\tfrac 1{2\kappa}=C_3\int_0^T\!\Big(1+\tfrac 1{\sqrt {T-s}} \Big)\,ds=C_3\big(T+2\sqrt {T}\big)\,$, that is, $T=\Big(\sqrt{1+(2\kappa C_3)^{-1}}-1\Big)^2\,$, then an integration of \eqref{Ineq:Gronwall} on $(0,T)$ gives
\begin{multline*}
\frac1\kappa\,H(T)\le C_1\int_0^T\left(1+\tfrac{1}{\sqrt {T-s}}\right)\left(1+\tfrac 1{\sqrt s}\right)\,ds+C_3\int_0^T\left(1+\tfrac{1}{\sqrt {T-s}}\right)\,H(T)\;ds\\
=\left(\pi+4\sqrt T+T\right)\,C_1 +\frac 1{2\kappa}\,H(T)\;,
\end{multline*}
that is
\[
H(T)\le 2\left(\pi+4\sqrt T+T\right) \,\kappa\,C_1\;.
\]
Injecting this estimate into \eqref{Ineq:Gronwall}, we obtain
\[
\frac1\kappa\,\|n(t + \tau)\|_{H^1(K)}\le\left(1+\tfrac 1{\sqrt t}\right)\,C_1+C_3\,H(T)\le\left(1+\tfrac 1{\sqrt t}\right)\,C_1+2\left(\pi+4\sqrt T+T\right)\,\kappa\,C_1\,C_3
\]
for any $t\in(0,T)\,$. This bounds $\|n(T+\tau)\|_{H^1(K)}$ for any $\tau>0\,$, and thus completes the proof with $C$ given by the right hand side of the above inequality at $t=T\,$. \end{proof}

We shall actually prove that $n(t)$ can be bounded not only in $H^1(K)$ but also in $H^1(n_\infty^{-1})\,$. However, in order to prove that, we need a spectral gap estimate, which is the subject of the next section.

\section{A spectral gap estimate}\label{Sec:f}

Introduce $f$ and $g$ defined by
\[
n(x,t)=n_\infty(x)(1+f(x,t))\qquad \mbox{and} \qquad c(x,t)=c_\infty(x)(1+g(x,t))\;.
\]
By~\eqref{Eqn:Keller-Segel-Rescaled}, $(f,g)$ is solution of the non-linear problem
\begin{equation}\label{eq:nonlinpmeonfg}
\left\{
\begin{array}{ll}
\displaystyle \frac{\partial f}{\partial t} - \mathcal L(t,x,f,g) =-\frac{1}{n_\infty}\,\nabla \cdot \left[f\,n_\infty\,\nabla \left(g\,c_\infty\right)\right] \qquad & x\in\R^2\,,\;t>0\;,\vspace{.3cm}\\
\displaystyle -\Delta (c_\infty\,g) =f\,n_\infty\qquad & x\in\R^2\,,\;t>0\;,
\end{array}\right.
\end{equation}
where $\mathcal L$ is the linear operator given by
\[
\mathcal L(t,x,f,g) =\frac{1}{n_\infty}\,\nabla\cdot \left [n_\infty\,\nabla \left(f- g\, c_\infty\right)\right]\;.
\]
The conservation of mass is replaced here by $\int_{\R^2} f\,n_\infty\,dx=0\,$.
\begin{lemma}\label{lemma:PoincGauss} Let $\sigma$ be a positive real number. For any $g\in H^1\cap L^1(\R^2)$ such that $\int_{\R^2}g\,dx=0$, we have
\[
\int_{\R^2} \left(| \nabla g |^2 + \frac{| x |^2}{4\sigma^2}\,|g|^2 \right)\, dx \ge \tfrac 2\sigma \int_{\R^2} |g|^2\;dx\;.
\]
\end{lemma}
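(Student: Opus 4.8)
The plan is to read the left-hand side as the quadratic form of the harmonic oscillator $-\Delta+\frac{|x|^2}{4\sigma^2}$ and to reduce the inequality to a Gaussian Poincaré inequality by factoring out its ground state. Concretely, I would set $\varphi(x)=e^{-|x|^2/(4\sigma)}$ and write $g=\varphi\,h$. Since $\nabla\!\cdot\!\big(\tfrac{x}{2\sigma}\big)=\tfrac1\sigma$ in dimension two, an integration by parts gives the completion of the square
\[
\int_{\R^2}\Big(|\nabla g|^2+\tfrac{|x|^2}{4\sigma^2}\,|g|^2\Big)\,dx=\int_{\R^2}\big|\nabla g+\tfrac{x}{2\sigma}\,g\big|^2\,dx+\tfrac1\sigma\int_{\R^2}|g|^2\,dx .
\]
Because $\nabla g+\tfrac{x}{2\sigma}\,g=\varphi\,\nabla h$, the first term equals $\int_{\R^2}|\nabla h|^2\,\varphi^2\,dx$ and $\int_{\R^2}|g|^2\,dx=\int_{\R^2}h^2\,\varphi^2\,dx$. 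Hence, with the Gaussian weight $d\gamma=\varphi^2\,dx=e^{-|x|^2/(2\sigma)}\,dx$, the claim is equivalent to $\int_{\R^2}|\nabla h|^2\,d\gamma\ge\tfrac1\sigma\int_{\R^2}h^2\,d\gamma$.

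This last inequality is the spectral-gap (Poincaré) inequality for the Ornstein–Uhlenbeck operator $-\Delta+\tfrac{x}{\sigma}\!\cdot\!\nabla$ attached to $d\gamma$, whose first non-zero eigenvalue is $\tfrac1\sigma$, with eigenfunctions the linear coordinates $x_i$. I would therefore quote it, or reprove it in one line by tensorising the one-dimensional Hermite expansion, in the sharp form
\[
\int_{\R^2}|\nabla h|^2\,d\gamma\ \ge\ \tfrac1\sigma\int_{\R^2}\big(h-\bar h\big)^2\,d\gamma,\qquad \bar h=\frac{\int_{\R^2}h\,d\gamma}{\int_{\R^2}d\gamma}\,,
\]
and conclude, provided the Gaussian average $\bar h$ vanishes: then the right-hand side is $\tfrac1\sigma\int h^2\,d\gamma=\tfrac1\sigma\int|g|^2\,dx$, which combined with the completion of the square yields exactly the factor $\tfrac2\sigma$.

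The main obstacle is to secure $\bar h=0$ from the hypothesis. Observe that $\bar h\propto\int_{\R^2}h\,d\gamma=\int_{\R^2}g\,\varphi\,dx=\int_{\R^2}g\,e^{-|x|^2/(4\sigma)}\,dx$, so $\bar h=0$ is orthogonality of $g$ to the Gaussian ground state, \emph{not} the stated zero-mass condition $\int_{\R^2}g\,dx=0$; these two constraints do not coincide. Without $\bar h=0$ the factorisation only delivers the weaker ground-state bound $\int_{\R^2}(|\nabla g|^2+\tfrac{|x|^2}{4\sigma^2}|g|^2)\,dx\ge\tfrac1\sigma\int_{\R^2}|g|^2\,dx$, the extra term being $\tfrac1\sigma\,\big(\int_{\R^2}g\,\varphi\,dx\big)^2/\!\int_{\R^2}\varphi^2\,dx$. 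I would therefore expect the decisive step to be one of the following: (i) to check that in the intended application $g$ genuinely satisfies $\int_{\R^2}g\,e^{-|x|^2/(4\sigma)}\,dx=0$ (so that this is the weight the statement should carry), or (ii) to control the residual term above using the true hypothesis $\int_{\R^2}g\,dx=0$ together with the additional structure available on $g$, for instance that it arises as a rescaled chemo-attractant fluctuation solving a Poisson-type equation. Reconciling the Lebesgue zero-mean condition with the Gaussian orthogonality that the constant $\tfrac2\sigma$ really requires is where the substance of the argument must lie.
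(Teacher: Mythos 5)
Your route is exactly the paper's: the completion of the square you perform is nothing but the substitution $g=f\,e^{-|x|^2/(4\sigma)}$ into the Poincar\'e inequality for the Gaussian measure $d\mu_\sigma=e^{-|x|^2/(2\sigma)}\,dx$, which is precisely how the published proof is phrased (``the result holds with $g=f\,e^{-|x|^2/(4\sigma)}$''). So as far as the mechanism goes, you have reconstructed the intended argument, including the sharpness discussion (second eigenvalue $2/\sigma$ of the harmonic oscillator, eigenfunctions $x_i\,e^{-|x|^2/(4\sigma)}$).

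The obstacle you isolate is real, and it is not an artefact of your write-up: under the substitution, the side condition $\int_{\R^2}f\,d\mu_\sigma=0$ of the Gaussian Poincar\'e inequality becomes $\int_{\R^2}g\,e^{-|x|^2/(4\sigma)}\,dx=0$, not the stated $\int_{\R^2}g\,dx=0$, and the paper's one-line proof passes over this silently. Your option (i) is the correct diagnosis: the hypothesis the proof actually supports is orthogonality to the Gaussian ground state, and with only $\int_{\R^2}g\,dx=0$ the constant $2/\sigma$ cannot be salvaged. Indeed, taking $g=\phi_0+\varepsilon\,\psi$ with $\phi_0=e^{-|x|^2/(4\sigma)}$, $\psi(x)=(R/|x|)^2$ on the annulus $\delta R<|x|<R$ (suitably capped and smoothed) and $\varepsilon=-\int\phi_0\,dx/\int\psi\,dx$, one checks that $\int g\,dx=0$ while the Rayleigh quotient tends to $\tfrac1\sigma\bigl(1+\tfrac1{\log(1/\delta)}\bigr)$ as $R\to\infty$, which is strictly below $2/\sigma$; the residual term you compute, $\tfrac1\sigma\bigl(\int g\,\phi_0\,dx\bigr)^2/\!\int\phi_0^2\,dx$, is exactly what is being lost. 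So the statement needs the Gaussian-weighted zero mean, and one should then also track how the lemma is invoked in Proposition~\ref{S5T1}, where the function $h=\sqrt{n_\infty}\,f$ satisfies $\int h\,\sqrt{n_\infty}\,dx=0$ with $\sqrt{n_\infty}$ only close to a Gaussian for small $M$ --- your option (ii), i.e.\ a perturbative control of the residual term consistent with the smallness of $M$ used throughout that section. In short: same method as the paper, and you have correctly identified a gap that the paper's own proof does not close.
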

\begin{proof}
The Poincar\'e inequality for the Gaussian measure $d\mu_\sigma(x)=e^{-|x|^2/(2\sigma)}\,dx$ is given by
\[
\sigma\int_{\R^2} |\nabla f|^2\;d\mu_\sigma\ge\int_{\R^2}|f|^2\;d\mu_\sigma\quad\forall\;f\in H^1(d\mu_\sigma)\;\mbox{such that }\int_{\R^2}f\;d\mu_\sigma=0\;.
\]
The result holds with $g=f\,e^{-|x|^2/(4\sigma)}\,$. Notice that for $\sigma=1$,  the second eigenvalue of the harmonic oscillator in $\R^2$ is $2\,$, thus establishing the optimality in both of the above inequalities. The case $\sigma\neq 1$ follows from a scaling argument.
\end{proof}

\begin{proposition}
\label{S5T1}
Consider a stationary solution $n_\infty$ of \eqref{Eqn:Keller-Segel-Rescaled}. There exist a constant $M_2\in(0,8\pi)$ and a function $\Lambda=\Lambda(M)$ such that, for any $M\in(0,M_2)\,$, $\Lambda(M)>0$ and 
\[
\int_{\R^2} | \nabla f |^2\, n_\infty\;dx \ge \Lambda(M) \int_{\R^2} |f|^2\, n_\infty\;dx\quad\forall\;f\in H^1(n_\infty\,dx)\;\mbox{such that }\int_{\R^2}f\,n_\infty\;dx=0\;.
\]
Moreover, $\lim_{M\to 0_+}\Lambda(M)=1\,$. 
\end{proposition}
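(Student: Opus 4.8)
The plan is to recognise that the asserted inequality is exactly the Poincar\'e (spectral gap) inequality for the measure $n_\infty\,dx$, and to prove it by establishing uniform convexity of the potential $V:=-\log n_\infty$. Writing the stationary profile as $n_\infty = M\,e^{c_\infty-|x|^2/2}/Z$ with $Z=\int_{\R^2}e^{c_\infty-|y|^2/2}\,dy$, one has $V(x)=\tfrac12|x|^2-c_\infty(x)+\log(Z/M)$, so that $\nabla^2 V = \mathrm{Id}-\nabla^2 c_\infty$. If I can show $\nabla^2 V\ge(1-\kappa(M))\,\mathrm{Id}$ with $\kappa(M)\to0$ as $M\to0_+$, then by the Bakry--\'Emery criterion (equivalently the Brascamp--Lieb inequality for uniformly log-concave measures) the measure $n_\infty\,dx$ satisfies $\int_{\R^2}f^2\,n_\infty\,dx\le(1-\kappa(M))^{-1}\int_{\R^2}|\nabla f|^2\,n_\infty\,dx$ for every $f$ with $\int_{\R^2}f\,n_\infty\,dx=0$, which is the claim with $\Lambda(M)\ge 1-\kappa(M)$.

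The heart of the proof is therefore the Hessian bound on $c_\infty$. Here I use that $n_\infty$, hence $c_\infty$, is radially symmetric and that $\|n_\infty\|_\infty=n_\infty(0)$. From $-\Delta c_\infty=n_\infty$ in radial form one gets $c_\infty'(r)=-r^{-1}\int_0^r s\,n_\infty(s)\,ds$. The two eigenvalues of $\nabla^2 c_\infty$ are the radial one $c_\infty''(r)$ and the tangential one $c_\infty'(r)/r$; using $0\le n_\infty\le n_\infty(0)$ together with $\int_0^r s\,n_\infty(s)\,ds\le \tfrac12 n_\infty(0)\,r^2$, both are controlled in absolute value by a fixed multiple of $\|n_\infty\|_\infty$ (one finds $c_\infty'(r)/r\in[-\tfrac12\|n_\infty\|_\infty,0]$ and $c_\infty''(r)\in[-\|n_\infty\|_\infty,\tfrac12\|n_\infty\|_\infty]$). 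Hence $\|\nabla^2 c_\infty\|\le\|n_\infty\|_\infty$ uniformly in $x$, so $\kappa(M)=\|n_\infty\|_\infty$ is an admissible choice and $\nabla^2 V\ge(1-\|n_\infty\|_\infty)\,\mathrm{Id}$.

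Finally I invoke the bifurcation information \eqref{Eqn:BifurcationDiagram}: since $\|n_\infty\|_\infty\to0$ as $M\to0_+$, there is $M_2\in(0,8\pi)$ such that $\|n_\infty\|_\infty<1$ for all $M\in(0,M_2)$, which makes $V$ uniformly convex and gives $\Lambda(M)\ge 1-\|n_\infty\|_\infty>0$ on $(0,M_2)$; in particular $\liminf_{M\to0_+}\Lambda(M)\ge1$. For the matching upper bound, I would test the Rayleigh quotient with $f(x)=x_1$, which satisfies $\int_{\R^2}x_1\,n_\infty\,dx=0$ by radial symmetry: then $\int_{\R^2}|\nabla f|^2\,n_\infty\,dx=M$, while $\tfrac1M\int_{\R^2}x_1^2\,n_\infty\,dx\to\int_{\R^2}x_1^2\,d\gamma=1$ as $M\to0_+$ (with $\gamma$ the standard Gaussian, since $n_\infty/M\to\gamma$, itself a consequence of $\|\nabla^2 c_\infty\|_\infty\to0$), whence $\Lambda(M)\le M\big/\!\int_{\R^2}x_1^2\,n_\infty\,dx\to1$. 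Combining the two bounds yields $\lim_{M\to0_+}\Lambda(M)=1$.

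I expect the main obstacle to be the uniform Hessian estimate on $c_\infty$ --- controlling the full matrix $\nabla^2 c_\infty$ (in particular the tangential eigenvalue $c_\infty'(r)/r$), rather than merely $\Delta c_\infty=-n_\infty$, which is where radial symmetry and the explicit primitive for $c_\infty'$ are essential; a secondary technical point is the clean justification of the Bakry--\'Emery/Brascamp--Lieb step, which is legitimate here because $n_\infty$ is smooth, positive, and has finite moments of every order.
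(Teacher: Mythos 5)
Your proof is correct, and it takes a genuinely different route from the paper. The paper proceeds by the ground-state substitution $h=\sqrt{n_\infty}\,f$, expands $|\nabla f|^2 n_\infty$ into a harmonic-oscillator quadratic form plus error terms, controls those errors by $\|n_\infty\|_{L^\infty(\R^2)}$ and $\|\nabla c_\infty\|_{L^\infty(\R^2)}$, and invokes the Gaussian Poincar\'e inequality (Lemma~\ref{lemma:PoincGauss}) after optimising over an auxiliary parameter $\sigma\in(1,2)$; it never needs the Hessian of $c_\infty$, only first-order and $L^\infty$ information. You instead read the statement as a spectral gap for the log-concave measure $n_\infty\,dx$ and apply Bakry--\'Emery/Brascamp--Lieb, reducing everything to the pointwise bound $\nabla^2 c_\infty\le\|n_\infty\|_{L^\infty(\R^2)}\,\mathrm{Id}$, which you obtain from radial symmetry and the explicit primitive $c_\infty'(r)=-r^{-1}\int_0^r s\,n_\infty(s)\,ds$; your eigenvalue bounds for the radial and tangential directions are correct (indeed $c_\infty''\le\tfrac12\|n_\infty\|_\infty$ and $c_\infty'/r\le 0$, so you even get $\Lambda(M)\ge 1-\tfrac12\|n_\infty\|_{L^\infty(\R^2)}$). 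Your argument is cleaner and yields a sharper, more explicit constant depending only on $\|n_\infty\|_\infty$, at the price of using radial symmetry essentially and importing the Bakry--\'Emery machinery; the paper's argument is more elementary and self-contained but cruder. Two minor remarks: the upper bound via the test function $f=x_1$ is not actually required by the statement (which only asks for \emph{some} admissible $\Lambda$ tending to $1$, and your lower bound $1-\|n_\infty\|_\infty$ already supplies it), and the step $\tfrac1M\int_{\R^2}x_1^2\,n_\infty\,dx\to 1$ as stated needs a little more than $\|\nabla^2 c_\infty\|_\infty\to 0$ --- one should also invoke the uniform bound on $c_\infty$ and the Gaussian tail domination of $n_\infty$ quoted in Section~\ref{Sec:Intro} to pass to the limit in the second moment.
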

\begin{proof} We define $h=\sqrt{n_\infty}\,f=\sqrt\lambda\,e^{-|x|^2/4+c_\infty/2}\,f$ with $\lambda=M\,\left(\int_{\R^2}e^{-|x|^2/4+c_\infty/2}\,dx\right)^{-1}\,$. By expanding the square, we find that
\[
\lambda\,|\nabla f |^2\,n_\infty = | \nabla h |^2 +\frac{| x |^2}4\, h^2 +\frac 14\, | \nabla c_\infty |^2\, h^2+h\,\nabla h \cdot (x-\nabla c_\infty) - \frac 12\,x\cdot \nabla c_\infty\,h^2\;.
\]
An integration by parts shows that
\[
\int_{\R^2}h\,\nabla h \cdot x\;dx=-\int_{\R^2}h^2\;dx\;.
\]
Another integration by parts and the definition of $c_\infty$ give
\[
\int_{\R^2} h\,\nabla h \cdot \nabla c_\infty\;dx = \frac 12\int_{\R^2} h^2\,(-\Delta c_\infty)\;dx = \frac 12\int_{\R^2} h^2\,n_\infty\;dx \le \frac 12\,\nrmrd{n_\infty}\infty\int_{\R^2} h^2\;dx\;.
\] 
Recall that by \eqref{Eqn:BifurcationDiagram}, $\lim_{M\to 0_+}\nrmrd{n_\infty}\infty=0\,$. On the other hand, we have
\[
\frac 12 \int_{\R^2} x\cdot \nabla c_\infty \,h^2\;dx \le  \frac{\sigma^2-1}{\sigma^2}\int_{\R^2} \frac{| x |^2}4\, h^2\;dx + \frac 14\,\frac{\sigma^2} {\sigma^2-1}\int_{\R^2} | \nabla c_\infty |^2\, h^2\;dx
\]
for any $\sigma>1\,$. Hence it follows from Lemma~\ref{lemma:PoincGauss} that
\[
\lambda\,\int_{\R^2}|\nabla f |^2\,n_\infty\;dx\ge \underbrace{\left(\frac 2\sigma-1-\frac{\sigma^2\nrmrd{\nabla c_\infty}\infty^2}{4(\sigma^2-1)}-\frac 12\,\nrmrd{n_\infty}\infty\right)}_{\le\Lambda(M)}\underbrace{\int_{\R^2}h^2\;dx}_{=\lambda\int_{\R^2}|f|^2\,n_\infty\;dx}\,.
\]
The coefficient $\Lambda(M)$ is positive for any $M<M_2$ with $M_2>0$, small enough, according to~\eqref{Eqn:BifurcationDiagram},~\eqref{S3E2} and Lemma~\ref{Lem:smallness}. Notice that for each given value of $M<M_2$, an optimal value of $\sigma\in(1,2)$ can be found. \end{proof}

We shall now consider the case of an initial data $n_0$ such that $n_0/n_\infty\in L^2(n_\infty)$, which is a slightly more restrictive case than the framework of Section~\ref{Sec:Estimates}. Indeed, there exists a constant $C>0$ such that for any $x \in \R^2$ with $|x|>1$ we have $|c_\infty + M/(2\pi)\log|x|| \le C$, see~\cite[Lemma~4.3]{BDP}, whence $n_\infty\,K=e^{c_\infty}$ behaves like $O(|x|^{-M/(2\pi)})$ as $|x|\to\infty\,$. If $(n,c)$ is a solution of \eqref{Eqn:Keller-Segel-Rescaled}, then
\[
\frac{\partial n}{\partial t}-n_\infty\,\nabla \cdot \left(\frac 1{n_\infty}\, \nabla n\right)=(\nabla c_\infty-\nabla c)\cdot \nabla n +2n+n^2\;.
\]
\begin{corollary}
Under the assumptions of Theorem~\ref{Thm:Main}, if $M<M_2$, then any solution of \eqref{Eqn:Keller-Segel-Rescaled} is bounded in $L^\infty(\R^+, L^2(n_\infty^{-1}\,dx))\cap L^\infty((\tau,\infty),H^1(n_\infty^{-1}\,dx))$ for any $\tau>0\,$.\end{corollary}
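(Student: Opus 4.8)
The plan is to run a weighted-energy estimate directly on the equation for $n$ with weight $n_\infty^{-1}$ displayed just above the statement, paralleling the proof of Proposition~\ref{S3T1} but with $K$ replaced by $n_\infty^{-1}$. First I would multiply that equation by $n\,n_\infty^{-1}$ and integrate by parts (justified by the truncation procedure we systematically omit), which produces
\[
\tfrac12\tfrac{d}{dt}\int_{\R^2}\tfrac{n^2}{n_\infty}\,dx+\int_{\R^2}\tfrac{|\nabla n|^2}{n_\infty}\,dx=\int_{\R^2}(\nabla c_\infty-\nabla c)\cdot\nabla n\,\tfrac{n}{n_\infty}\,dx+2\int_{\R^2}\tfrac{n^2}{n_\infty}\,dx+\int_{\R^2}\tfrac{n^3}{n_\infty}\,dx\,.
\]
The whole game is then to absorb the right-hand side into the dissipation $\int|\nabla n|^2/n_\infty$ and a coercive multiple of $\int n^2/n_\infty$, so that a differential inequality $\tfrac{d}{dt}y+\gamma\,y\le C$ with $\gamma>0$ yields a uniform bound on $y(t)=\|n(t)\|_{L^2(n_\infty^{-1})}^2$ by Gr\"onwall.

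The decisive ingredient is a sharp coercivity inequality for the weighted Dirichlet form. Expanding the elementary inequality $\int n_\infty|\nabla(n/n_\infty)|^2\,dx\ge0$ and using $-\Delta c_\infty=n_\infty$ gives the identity $\int n_\infty|\nabla(n/n_\infty)|^2=\int|\nabla n|^2/n_\infty-2\int n^2/n_\infty-\int n^2$, i.e.\ a Poincar\'e inequality with bare constant $2$. This bare $2$ is useless here, because it is exactly matched by the linear source term $2\int n^2/n_\infty$: the two cancel and leave no room for the remaining terms. I would therefore upgrade it with the spectral gap of Proposition~\ref{S5T1} applied to $f=n/n_\infty-1$ (which satisfies $\int f\,n_\infty\,dx=0$): since $\int n_\infty|\nabla f|^2\ge\Lambda(M)\int n_\infty f^2=\Lambda(M)\,(\int n^2/n_\infty-M)$, combining the two yields the improved bound $\int|\nabla n|^2/n_\infty\ge(2+\Lambda(M))\int n^2/n_\infty-\Lambda(M)\,M$. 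Now the dissipation beats the linear source by the positive margin $\Lambda(M)$, which tends to $1$ as $M\to0_+$; this is precisely the mechanism behind $\delta(M)\to1$ in Theorem~\ref{Thm:Main}.

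It then remains to control the drift and cubic terms. For the cubic term I would use $\int n^3/n_\infty\le\|n\|_{L^\infty(\R^2)}\int n^2/n_\infty$ with $\|n\|_{L^\infty(\R^2)}\le C_1$ from \eqref{S3E1}, and for the drift term a Young splitting, $|\int(\nabla c_\infty-\nabla c)\cdot\nabla n\,n/n_\infty|\le\tfrac\mu2\int|\nabla n|^2/n_\infty+\tfrac1{2\mu}\|\nabla c_\infty-\nabla c\|_{L^\infty(\R^2)}^2\int n^2/n_\infty$, bounding $\|\nabla c_\infty-\nabla c\|_{L^\infty(\R^2)}\le\|\nabla c_\infty\|_{L^\infty(\R^2)}+C_2$ via \eqref{S3E2}. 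By Lemma~\ref{Lem:smallness} and \eqref{Eqn:BifurcationDiagram}, the quantities $C_1$, $C_2$ and $\|\nabla c_\infty\|_{L^\infty(\R^2)}$ are all small when $M$ is small, so that, retaining a small fraction $\mu$ of the dissipation for the drift and using the improved Poincar\'e on the rest, the (schematic) coefficient $\gamma(M)=(1-\mu)(2+\Lambda(M))-2-C_1-\tfrac1{2\mu}(\|\nabla c_\infty\|_{L^\infty(\R^2)}+C_2)^2$ is strictly positive for $M<M_2$ (shrinking $M_2$ if necessary). Gr\"onwall then gives the claimed uniform bound in $L^2(n_\infty^{-1})$.

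For the $H^1(n_\infty^{-1})$ estimate I would repeat the Duhamel argument of Corollary~\ref{S3T2} verbatim, now with the semigroup $S(t)$ generated by $-n_\infty\nabla\cdot(n_\infty^{-1}\nabla\cdot)$ on $L^2(n_\infty^{-1})$ and its smoothing property $\|S(t)h\|_{H^1(n_\infty^{-1})}\le\kappa(1+t^{-1/2})\|h\|_{L^2(n_\infty^{-1})}$: writing $n(t)=S(t)n_0-\int_0^tS(t-s)[(\nabla c-\nabla c_\infty)\cdot\nabla n]\,ds+\int_0^tS(t-s)[(2n+n^2)]\,ds$, taking $H^1(n_\infty^{-1})$ norms, inserting the $L^\infty$ bounds on $\nabla c-\nabla c_\infty$ and $n$ together with the $L^2(n_\infty^{-1})$ bound just obtained, and closing with the same fixed-$T$ Gr\"onwall bootstrap; the $t^{-1/2}$ singularity is exactly what forces the restriction to $t>\tau$. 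The main obstacle is the coercivity step: the linear source $2n$ saturates the naive spectral gap, so one genuinely needs the refined gap of Proposition~\ref{S5T1} and the smallness of the mass to line up all constants and secure $\gamma(M)>0$. The remaining technical point is to verify the weighted smoothing estimate for $S(t)$, the weight $n_\infty^{-1}$ being comparable to $K$ up to polynomial corrections.
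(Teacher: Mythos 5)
Your argument is correct, and for the $H^1(n_\infty^{-1}\,dx)$ part it coincides with the paper's: the authors likewise invoke the smoothing estimate for the semi-group generated by $-n_\infty\,\nabla\cdot(n_\infty^{-1}\,\nabla\,\cdot\,)$ on $L^2(n_\infty^{-1}\,dx)$ (citing \cite{MR697382}) and then repeat the fixed-window bootstrap of Corollary~\ref{S3T2}. For the $L^2(n_\infty^{-1}\,dx)$ bound, however, you take a genuinely different route. The paper simply transposes the proof of Proposition~\ref{S3T1}: the whole right-hand side of the analogue of \eqref{S3E4}, including the linear source $2\int_{\R^2}n^2\,n_\infty^{-1}\,dx$, is absorbed into $\varepsilon\int_{\R^2}|\nabla n|^2\,n_\infty^{-1}\,dx+C$ by the Escobedo--Kavian interpolation inequality adapted to the weight $n_\infty^{-1}$ (comparable to $K$ up to the slowly varying factor $e^{-c_\infty}$), after which the bare Poincar\'e constant $2$ already furnishes strict coercivity; no smallness is used at that stage beyond $M<M_1$. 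You instead keep the exact identity $\int_{\R^2}n_\infty\,|\nabla(n/n_\infty)|^2\,dx=\int_{\R^2}|\nabla n|^2\,n_\infty^{-1}\,dx-2\int_{\R^2}n^2\,n_\infty^{-1}\,dx-\int_{\R^2}n^2\,dx$, correctly observe that on this route the bare constant $2$ is saturated by the source term, and repair it with the spectral gap of Proposition~\ref{S5T1} applied to $f=n/n_\infty-1$, which yields the improved bound $\int_{\R^2}|\nabla n|^2\,n_\infty^{-1}\,dx\ge(2+\Lambda(M))\int_{\R^2}n^2\,n_\infty^{-1}\,dx-\Lambda(M)\,M$ (your computation, using $\int_{\R^2}f^2\,n_\infty\,dx=\int_{\R^2}n^2\,n_\infty^{-1}\,dx-M$, checks out); the drift and cubic terms are then absorbed via the smallness of $C_1$, $C_2$ and $\nrmrd{\nabla c_\infty}\infty$. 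Both closings are sound. Yours buys independence from the weighted interpolation inequality for $n_\infty^{-1}$, which the paper waves through as ``straightforward'', and makes transparent why the decay rate tends to $1$ as $M\to0_+$; the price is that your coercivity constant $\gamma(M)$ may force you to shrink the threshold below the $M_2$ of Proposition~\ref{S5T1} --- harmless here, since $M^*$ is only required to exist, but worth stating explicitly, as the corollary as written asserts the conclusion for all $M<M_2$.
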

\begin{proof} The uniform bound in $L^2(n_\infty^{-1}\,dx)$ follows from \eqref{S3E4}, up to the replacement of $K$ by $1/n_\infty\,$, which is straightforward. As for the bound in $L^\infty((\tau,\infty),H^1(n_\infty^{-1}\,dx))\,$, one can observe that the linear semi-group $S(t)$ generated by the self-adjoint operator $-n_\infty\,\nabla \cdot \left(\tfrac{1}{n_\infty}\, \nabla n\right)$ on the space $L^2(n_\infty^{-1})\,$, with domain $H^2(n_\infty^{-1})$, satisfies $\left\|S(t)\,n_0\right\|_{H^1(n_\infty^{-1}\,dx)}\le\tfrac\kappa{\sqrt t}\,\left\|n_0\right\|_{L^2(n_\infty^{-1}\,dx)}$ for some $\kappa>0\,$, see for instance \cite[Theorem VII.7]{MR697382}. The estimate then follows as in Corollary~\ref{S3T2}.\end{proof}

\section{Proof of Theorem \ref{Thm:Main}}\label{Sec:ExponentialConvergence}
This Section is devoted to the proof of our main result. If we multiply equation \eqref{eq:nonlinpmeonfg} by $f\, n_\infty$ and integrate by parts, we get
\be{Eqn:fninfini}
\frac{1}{2}\frac{d}{dt}\int_{\R^2}|f|^2 \,n_\infty\;dx+
\int_{\R^2} |\nabla f|^2 \,n_\infty\;dx=\int_{\R^2}\nabla f\cdot\nabla\left(g\,c_\infty \right)\,n_\infty\;dx +\int_{\R^2} \nabla f\cdot\,\nabla(g\,c_\infty)\,f\, n_\infty\;dx\;.
\ee
The first term of the right hand side can be estimated as follows. By the Cauchy-Schwarz inequality, we know that
\[
\int_{\R^2} \nabla f\cdot\nabla\left(g\,c_\infty \right)\,n_\infty\;dx\le\nrmn{\nabla f}2\,\nrmn{\nabla (g\, c_\infty)}2\;.
\]
By H\"older's inequality, for any $q>2$ we have
\begin{equation*}
  \nrmn{\nabla (g\, c_\infty)}2 \le M^{1/2-1/q}\,\|n_\infty\|_{L^\infty(\R^2)}^{1/q}\nrmrd{\nabla (g\, c_\infty)}q\;.
\end{equation*}
The HLS inequality with $1/p=1/2+1/q$ then gives
\begin{equation*}
  \nrmrd{\nabla (g\, c_\infty)}q\le \frac 1{2\pi}\left(\int_{\R^2}\left|\,(f\,n_\infty)*\tfrac 1{|\,\cdot\,|}\,\right|^q\,dx\right)^\frac 1q\le\frac{C_{\rm HLS}}{2\pi}\,\nrmrd{f\,n_\infty}p\;.
\end{equation*}
By H\"older's inequality, $\nrmrd{f\,n_\infty}p\le\nrmn f2\,\nrmrd{n_\infty}{q/2}^{1/2}\,$, from which we get
\begin{equation}\label{eq:drtede13}
\int_{\R^2}\nabla f\cdot\,\nabla(g\,c_\infty)\,f\, n_\infty\;dx\le C_*\,\|f\|_{L^2(n_\infty\,dx)}\nrmn{\nabla f}2
\end{equation}
where $C_*=C_*(M):={C_{\rm HLS}}\,(2\pi)^{-1}\,M^{1/2-1/q}\,\nrmrd{n_\infty}{q/2}^{1/2}\,\|n_\infty\|_{L^\infty(\R^2)}^{1/q}$ goes to 0 as $M \to 0\,$.

As for the second term in the right hand side of~\eqref{Eqn:fninfini}, using $g\,c_\infty = c-c_\infty$ and the Cauchy-Schwarz inequality, we have
\begin{eqnarray*}
\int_{\R^2}\nabla f\cdot\,\nabla(g\,c_\infty)\,f\, n_\infty\;dx  &\le& \|\nabla c - \nabla c_\infty\|_{L^\infty(\R^2)}\,\|f\|_{L^2(n_\infty\,dx)}\,\|\nabla f\|_{L^2(n_\infty\,dx)}\\
&\le&  \left( \|\nabla c\|_{L^\infty(\R^2)} + \|\nabla c_\infty\|_{L^\infty(\R^2)} \right)\,\|f\|_{L^2(n_\infty\,dx)}\,\|\nabla f\|_{L^2(n_\infty\,dx)}\;.
\end{eqnarray*}
We observe that $\nabla (g\,c_\infty)=\nabla c-\nabla c_\infty$ is uniformly bounded since $\nrmrd{\nabla c}\infty\le C_2(M)$ by \eqref{S3E2}, and $\nrmrd{\nabla c_\infty}\infty$ is also bounded by $C_2(M)$, for the same reasons.
\begin{equation}\label{eq:gauchde13}
\int_{\R^2} \nabla f\cdot\,\nabla(g\,c_\infty)\,f\, n_\infty\;dx \le 2\,C_2(M)\, \|f\|_{L^2(n_\infty\,dx)}\, \|\nabla f\|_{L^2(n_\infty\,dx)}\;.
\end{equation}
Moreover, according to Lemma~\ref{Lem:smallness}, we know that $\lim_{M\to 0_+}C_2(M)=0\,$.

By Proposition~\ref{S5T1}, $\nrmn f2\le\nrmn{\nabla f}2/\sqrt{\Lambda(M)}$ with $\lim_{M \to 0^+}\Lambda(M)=1\,$. Collecting~\eqref{eq:drtede13} and~\eqref{eq:gauchde13}, we obtain
\begin{equation*}
  \frac{1}{2}\frac{d}{dt}\int_{\R^2}|f|^2 \,n_\infty\;dx \le -\left[1-\gamma(M)\right] \int_{\R^2}|\nabla f|^2 \,n_\infty\;dx\quad\mbox{with}\quad\gamma(M):=\frac{C_*(M)+2\,C_2(M)}{\sqrt{\Lambda(M)}}\;.
\end{equation*}
We observe that $\lim_{M \to 0^+}\gamma(M)=0\,$. As long as $\gamma(M)<1\,$, we can use again Proposition~\ref{S5T1} to get
\be{eq:decayrate}
\frac{1}{2}\frac{d}{dt}\int_{\R^2}|f|^2 \,n_\infty\;dx\le -\,\delta\int_{\R^2}|f|^2 \,n_\infty\;dx\quad\mbox{with}\quad\delta=\Lambda(M)\,\left[1-\gamma(M)\right]\;.
\ee
Using a Gronwall estimate, this establishes the decay rate of $\nrmn f2=\nrmrd{\tfrac{n-n_\infty}{\sqrt{n_\infty}}}2\,$.

If $n_1$ and $n_2$ are two solutions of~\eqref{Eqn:Keller-Segel-Rescaled} in $C^0(\R^+,L^1(\R^2))\cap L^\infty((\tau,\infty)\times\R^2)$ for any $\tau>0\,$, Inequality~\eqref{eq:decayrate} also holds for $f=(n_2-n_1)/n_\infty$. As a consequence, if the initial condition is the same, then $n_1=n_2\,$, which proves the uniqueness result and concludes the proof of Theorem~\ref{Thm:Main}.\\ \qed

\begin{remark}\rm
\label{Rem:galmathcalL}
Proposition~\ref{S5T1} and~\eqref{eq:drtede13} rely on rather crude estimates of the spectral gap of the linear operator $\mathcal L\,$, defined on $L^2(n_{\infty})\,$, with domain $H^2(n_\infty)\,$. The operator has been divided in two parts which are treated separately, one in Proposition~\ref{S5T1}, the other one in~\eqref{eq:drtede13}. It would probably be interesting to study the operator $\mathcal L$ as a whole, trying to obtain an estimate of its spectral gap in $L^2(n_{\infty})$ without any smallness condition.
\end{remark}

\bigskip\noindent\emph{Acknowledgements.\/} M.E. is supported by Grant MTM2008-03541 and the RTRA \emph{Sciences math\'ematiques de Paris.\/} A.B. and J.D. are supported by the ANR projects \emph{IFO} and \emph{EVOL\/}.


\end{document}